\theoremstyle{plain}
\newtheorem{theorem}{Theorem}[section]
\newtheorem{corollary}[theorem]{Corollary}
\newtheorem{proposition}[theorem]{Proposition}
\newtheorem{lemma}[theorem]{Lemma}
\theoremstyle{definition}
\newtheorem{example}[theorem]{Example}
\newtheorem{definition}[theorem]{Definition}
 \DeclareMathOperator{\re}{Re\,}
\newcommand{\inner}[1]{\ensuremath{\left\langle #1\right\rangle}}
\newcommand{\eps}{\varepsilon}
\renewcommand{\leq}{\leqslant}
\renewcommand{\geq}{\geqslant}
\begin{document}

\title[Retraction and Bishop-Phelps-Bollob\'as theorem]{Simultaneously continuous retraction and Bishop-Phelps-Bollob\'as type theorem}

\author[Kim]{Sun Kwang Kim}
\address[Kim]{Department of Mathematics, POSTECH, Pohang (790-784), Republic of Korea}
\email{\texttt{lineksk@gmail.com}}

\author[Lee]{Han Ju Lee}
\address[Lee]{Department of Mathematics Education,
Dongguk University - Seoul, 100-715 Seoul, Republic of Korea}
\email{\texttt{hanjulee@dongguk.edu}}


\date{January 29th, 2014}

\subjclass[2000]{Primary 46B20; Secondary 46B04, 46B22}

\keywords{Banach space, approximation, retraction, norm-attaining operators, Bishop-Phelps-Bollob\'{a}s theorem.}

\thanks{The second named author partially supported by Basic Science Research Program through the National Research Foundation of Korea(NRF) funded by the Ministry of Education, Science and Technology (2012R1A1A1006869).}

\begin{abstract} The dual space $X^*$ of a Banach space $X$ is said to admit a uniformly simultaneously continuous retraction if there is a retraction $r$ from $X^*$ onto its unit ball $B_{X^*}$ which is uniformly continuous in norm topology and continuous in weak-$*$ topology. 

We prove that  if a Banach space (resp. complex Banach space) $X$ has a normalized unconditional Schauder basis  with unconditional basis constant 1 and if $X^*$ is uniformly monotone (resp. uniformly complex convex), then  $X^*$ admits a uniformly simultaneously continuous retraction. It is also shown that $X^*$ admits such an retraction  if $X= \left[\bigoplus X_i\right]_{c_0}$ or $X=\left[\bigoplus X_i\right]_{\ell_1}$, where $\{X_i\}$ is a family of separable Banach spaces whose  duals are uniformly convex with moduli of convexity $\delta_i(\eps)$ with $\inf_i \delta_i(\eps)>0$ for all $0<\eps<1$.

Let $K$ be a locally compact Hausdorff space and let $C_0(K)$ be the real Banach space consisting of all real-valued continuous functions vanishing at infinity. As an application of simultaneously continuous retractions, we show that a pair $(X, C_0(K))$ has the Bishop-Phelps-Bollob\'as property for operators if $X^*$ admits a uniformly simultaneously continuous  retraction. As a corollary, $(C_0(S), C_0(K))$ has the Bishop-Phelps-Bollob\'as property for operators for every locally compact metric space $S$.
\end{abstract}

\maketitle

\section{Introduction}

Let $X$ be a real or complex Banach space and $A$ be a subset of $X$. A continuous function $r:X\to A$ is said to be a {\it retraction} if $r$ is the identity on $A$. Retractions have various applications in nonlinear  geometric functional analysis  \cite{Ben1, Ben2, BenLin}. Benyamini introduced the notion of simultaneously continuous retraction from  dual space $X^*$ onto $B_{X^*}$. More precisely, the dual space $X^*$ of a Banach space $X$ is said to {\it admit a (resp. uniformly) simultaneously continuous retraction} if there is a retraction $r$ from $X^*$ onto $B_{X^*}$ which is both weak-$*$ continuous and norm continuous (resp. uniformly norm-continuous).
Benyamini \cite{Ben1} showed, in particular, that $E^*$ admits uniformly simultaneously continuous retraction if $E^*$ is a separable uniformly convex space, or $E$ is the space $C(K)$ of all real-valued continuous functions on a compact metric space $K$.

 As remarked in Proposition~4.22.~\cite{BenLin}, there is a connection between simultaneously continuous retractions and the denseness of norm attaining operators into $C(K)$. In this paper, we deal with the existence of uniformly simultaneous continuous retraction in a certain Banach space and its applications to Bishop-Phelps-Bollob\'as type theorem.

\section{Uniformly simultaneously continuous retraction}

Let $\{e_j\}$ be a normalized unconditional Schauder basis for $X$ with unconditional basis constant 1. Its biorthogonal functionals will be denoted by  $\{e_j^*\}$. In fact, it is easy to see that $X$ and $X^*$ are Banach lattices and, for every $x^*\in X^*$, we have
\[ x^* = \text{weak}*\sum_{j=1}^\infty x^*(j)e_j^*,\] where $x^*(j) = \inner{x^*, e_j}.$ 
 Recall that a Banach lattice $X$ is uniformly monotone if, for all $\eps>0$,
\[M(\eps) = \inf\{ \| |x| + |y| \|-1 : \|x\|=1, \|y\|\ge \eps\}>0.\] It is easy to check that $\eps \mapsto M(\eps)$ is a monotone increasing function and $M(\eps)\le \eps$ for all $\eps>0$. This $M$ is called the {\it modulus of monotonicity} of $X$. It is easy to check that if $X$ is uniformly monotone, then $X$ is strictly monotone. That is, $\| |x| +  |y| \|>\|x\|$ for all $x\in X$ and for all nonzero element $y$ in $X$. The uniform monotonicity of Banach lattice is equivalent to the uniform complex convexity of its complexification \cite{Lee, Lee2}. The complex convexity has been used to study density of norm-attaining operators between Banach spaces \cite{Acosta-RACSAM, ChoiKamLee}.

Benyamini showed \cite{Ben1} that if $X$ has a shrinking Schauder basis $\{e_j\}$ with $\{e_j^*\}$ beging strictly monotone, then $X^*$ admits a simultaneously continuous retraction. It is also shown that for $X=\ell_p$, $1\le p<\infty$ or $X=c_0$,  $X^*$ admits a uniformly simultaneously continuous retraction.

For $t\ge 0$, we define $M^{-1}(t) =\sup\{ \eps\ge 0: M(\eps)\le t\}$ for a monotone increasing function $M$. The modulus of continuity for a function $\varphi$ is defined by
\[ \omega_{\varphi}(t) = \sup\{ \|\varphi(x^*) - \varphi(y^*)\| : \|x^* - y^*\|\le t\}.\]
Let $f$ be a nonnegative function on a deleted neighborhood of $0$ with $\lim_{t\to 0+} f(t)=0$. We say that $X^*$ admits {\it a $f$-uniformly simultaneously continuous retraction} if there is a uniformly simultaneously continuous retraction $\varphi$ with $\omega_\varphi(t)\le f(t)$.

\begin{theorem}\label{thm:monotone}
Suppose that a Banach space $X$ has a normalized unconditional Schauder basis $\{e_j\}$ with unconditional basis constant 1. If $X^*$ is uniformly monotone with modulus $M$, then $X^*$ admits a uniformly simultaneously continuous retraction with modulus of continuity $2M^{-1}$.
\end{theorem}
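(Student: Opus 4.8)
The plan is to construct the retraction by a ``continuous greedy truncation'' adapted to the lattice structure of $X^*$ and then verify the two continuity properties separately; the weak-$*$ continuity will be the delicate point. Writing $x^*=\sum_j x^*(j)e_j^*$ and letting $P_n$ denote the basis projection onto $\mathrm{span}\{e_1^*,\dots,e_n^*\}$, I define for a real parameter $t\ge 0$ the operator
\[ Q_t x^* = P_{\lfloor t\rfloor}x^* + (t-\lfloor t\rfloor)\,x^*(\lfloor t\rfloor +1)\,e_{\lfloor t\rfloor+1}^*, \]
which multiplies the $j$-th coordinate of $x^*$ by a factor in $[0,1]$ that is nondecreasing in $t$. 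Since the unconditional constant is $1$, $X^*$ is a Banach lattice, so $t\mapsto\|Q_tx^*\|$ is continuous and nondecreasing with $Q_0x^*=0$ and $\sup_t\|Q_tx^*\|=\|x^*\|$ (the last equality because the norm is weak-$*$ lower semicontinuous and $P_nx^*\to x^*$ weak-$*$). I then set $\varphi(x^*)=x^*$ when $\|x^*\|\le 1$, and $\varphi(x^*)=Q_{t_0}x^*$ otherwise, where $t_0=t_0(x^*)=\inf\{t\ge 0:\|Q_tx^*\|\ge 1\}$. By continuity $\|Q_{t_0}x^*\|=1$ when $\|x^*\|>1$, so $\varphi$ is indeed a retraction onto $B_{X^*}$.

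Next I would establish the modulus bound $\omega_\varphi(t)\le 2M^{-1}(t)$. Put $t=\|x^*-y^*\|$. After disposing of the case $\|x^*\|,\|y^*\|\le 1$, I may assume $\|x^*\|>1$ and, writing $a=t_0(x^*)$ and $b=t_0(y^*)$ (with the convention $b=\infty$, $Q_\infty=\mathrm{Id}$, when $\|y^*\|\le 1$), that $a\le b$. I split
\[ \varphi(x^*)-\varphi(y^*)=\bigl(Q_ax^*-Q_bx^*\bigr)+\bigl(Q_bx^*-Q_by^*\bigr). \]
The second summand is $Q_b(x^*-y^*)$, whose coordinates are dominated by those of $x^*-y^*$, so by $1$-unconditionality its norm is at most $t$. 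For the first summand set $u=|Q_ax^*|$ and $v=|Q_bx^*|-|Q_ax^*|\ge 0$; then $\|u\|=\|\varphi(x^*)\|=1$, while $\|u+v\|=\|Q_bx^*\|\le 1+t$ (using $\|\varphi(y^*)\|=1$ and the triangle inequality, resp.\ $\|x^*\|\le 1+t$ in the mixed case). Uniform monotonicity gives $\|u+v\|\ge 1+M(\|v\|)$, hence $M(\|v\|)\le t$ and $\|v\|\le M^{-1}(t)$; since $Q_ax^*$ and $Q_bx^*$ have equal signs coordinatewise, $\|Q_ax^*-Q_bx^*\|=\|v\|\le M^{-1}(t)$. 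As $t\le M^{-1}(t)$, the two estimates combine to $\|\varphi(x^*)-\varphi(y^*)\|\le 2M^{-1}(t)$.

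The main obstacle is weak-$*$ continuity, because $t_0(\cdot)$ is built from the norm and can jump under weak-$*$ perturbations; indeed a naive radial or soft-thresholding retraction fails here (for $X=c_0$ one has $e_1^*+e_n^*\to e_1^*$ weak-$*$, while the corresponding radial images converge to $\tfrac12 e_1^*\ne e_1^*$). The resolution is that $\varphi$ admits a coordinatewise description through only finitely many coordinates: one checks that $\varphi(x^*)(j)=F_j(P_jx^*)\,x^*(j)$, where $F_j:\mathrm{span}\{e_1^*,\dots,e_j^*\}\to[0,1]$ is given by $F_j(w)=1$ if $\|w\|\le 1$, by $F_j(w)=0$ if $\|w-w(j)e_j^*\|\ge 1$, and otherwise by the unique $c\in(0,1)$ with $\|(w-w(j)e_j^*)+c\,w(j)e_j^*\|=1$. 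Strict monotonicity (a consequence of uniform monotonicity) shows each $F_j$ is well defined and continuous on this finite-dimensional space. Consequently, if $x^*_\alpha\to x^*$ weak-$*$ then $x^*_\alpha(j)\to x^*(j)$ for every $j$, so $P_jx^*_\alpha\to P_jx^*$ in norm (finite dimensions), whence $\varphi(x^*_\alpha)(j)=F_j(P_jx^*_\alpha)\,x^*_\alpha(j)\to F_j(P_jx^*)\,x^*(j)=\varphi(x^*)(j)$ coordinatewise. Since the net $\varphi(x^*_\alpha)$ lies in $B_{X^*}$ and $\overline{\mathrm{span}}\{e_j\}=X$, this coordinatewise convergence upgrades to $\varphi(x^*_\alpha)\to\varphi(x^*)$ weak-$*$, which completes the verification.
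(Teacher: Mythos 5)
Your retraction is exactly the one the paper constructs (greedy truncation of $x^*$ along the basis, stopping at the first point where the partial sum reaches the unit sphere), so the construction itself is not new; what differs is how you verify the two properties, and both of your verifications are correct and in fact cleaner than the paper's. For the modulus bound, the paper reduces to the case $\|x^*\|,\|y^*\|>1$ and then performs a somewhat involved normalization (replacing $x^*,y^*$ by $P_m^*x^*,P_m^*y^*$ and adjusting the $m$-th or $n$-th coordinate so that the two truncation levels coincide) before applying the triangle inequality together with the estimate $\|x^*-\varphi(x^*)\|\le M^{-1}(\|x^*\|-1)$; your telescoping $\varphi(x^*)-\varphi(y^*)=(Q_ax^*-Q_bx^*)+Q_b(x^*-y^*)$ handles the mixed case and the case of two truncation levels uniformly, with a single application of uniform monotonicity to the disjointly signed increment $v=|Q_bx^*|-|Q_ax^*|$, and yields the same bound $M^{-1}(t)+t\le 2M^{-1}(t)$. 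For weak-$*$ continuity, the paper argues by contradiction with subnets, extracting limits of the truncation parameters $t_\beta$ and using strict monotonicity to identify them; your coordinatewise formula $\varphi(x^*)(j)=F_j(P_jx^*)\,x^*(j)$ localizes everything to finite-dimensional spaces and is more transparent. One point to tighten there: $F_j$ itself is neither well defined nor continuous at points $w$ with $\|w\|=1=\|w-w(j)e_j^*\|$ (strict monotonicity forces $w(j)=0$ at such points, and nearby one finds $w'$ with $F_j(w')=1$ and $w''$ with $F_j(w'')=0$ -- already visible for $X=c_0$). What is actually true, and all your argument needs, is that the product $w\mapsto F_j(w)\,w(j)$ is continuous: at the problematic points the factor $w(j)$ tends to $0$ while $F_j$ stays in $[0,1]$, and away from them $F_j$ is continuous by strict monotonicity of $c\mapsto\|(w-w(j)e_j^*)+c\,w(j)e_j^*\|$. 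With that one-sentence repair your proof is complete, and it trades the paper's subnet bookkeeping for a finite-dimensional continuity check.
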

\begin{proof}
Notice that $X^*$ is uniformly monotone and it is order-continuous (cf. \cite{Lee}) and $\{e_j^*\}_{j=1}^\infty$ is a Schauder basis. 
Given $x^*=  \sum_{j=1}^\infty a_j e_j^*$ with $x^*\not\in B_{X^*}$, there is a unique $n$ so that
\[\left\|\sum_{j=1}^{n-1} a_j e_j^*\right\|<1, \ \ \ \text{and} \ \ \ \left\|\sum_{j=1}^{n} a_j e_j^*\right\|\ge 1.\] By the strict monotonicity and convexity of norm,  there is a unique $0<t\le 1$ so that
\[  \left\|\sum_{j=1}^{n-1} a_j e_j^* + ta_n e_n \right\|=1,\] and we define $\varphi(x^*) = \sum_{j=1}^{n-1} a_j e_j^* + ta_n e_n$. Defining $\varphi$ as an identity on $B_{X^*}$, we first show that $\varphi$ is uniformly norm continuous.

Notice that if $\|x^*\|\ge1$, then by the construction of $\varphi$ and uniform monotonicity,
\[ \|x^*\|= \| |\varphi(x^*)| + |x^*-\varphi(x^*)| \|\ge 1 + M( \|x^* - \varphi(x^*)\|)\]
and we have $M(\|x^* - \varphi(x^*)\|) \le \|x^*\|-1$. That is,
\[\|x^* - \varphi(x^*)\| <M^{-1}( \|x^*\|-1).\]
We claim that for all $x^*, y^*$ in $X^*$,
\[\|\varphi(x^*) -\varphi(y^*) \| \le 2M^{-1}(\|x^*-y^*\|). \]
Because $M(\eps)\le \eps$ for all $\eps>0$, we have $M^{-1}(t) \ge t$ for all $t>0$. Hence
this inequality is trivial if  $\|x^*\|\le 1$ and  $\|y^*\|\le 1$. If $\|x^*\|>1$ and $\|y^*\|\le 1$, then
\begin{align*}
\|\varphi(x^*) -\varphi(y^*) \|& =  \|\varphi(x^*)-y^*\|\le \|\varphi(x^*) - x^*\| + \|x^*-y^*\|\\
&\le M^{-1}(\|x^*\|-1) + \|x^*-y^*\|\\
&\le M^{-1}(\|x^*\|-\|y^*\|) + M^{-1}(\|x^*-y^*\|)\\
&\le 2M^{-1}(\|x^*- y^*\|).
\end{align*}
We assume that $\|x^*\|>1$ and $\|y^*\|>1$ and write
\[ \varphi(x^*) = \sum_{j=1}^{n-1} x^*(j) e_j^* + t x^*(n)e_n^* \ \ \ \text{and}  \ \ \ \varphi(y^*) = \sum_{j=1}^{m-1} y^*(j) e_j^* + sy^*(m)e_m^*\]
where $x^*(i)=x^*(e_i)$ and $y^*(i)=y^*(e_i)$ for every $i\in \mathbb{N}$.

 For each $n\in \mathbb{N}$, let $P_n$ be a projection on $X$ defined by $P_n(\sum \alpha_ie_i)=\sum_{i=1}^n\alpha_ie_i$ and $P_n^*$ be the adjoint operator.
We may also assume that $m\ge n$ and then $\varphi(x^*)=\varphi(P_m^*(x^*))$, $\varphi(y^*)=\varphi(P_m^*(y^*))$ and $\|P_m^* x^* - P_m^*y^*\|\le \|x^* - y^*\|$  shows that we can replace $x^*$ and $y^*$ by $P_m^*(x^*)$ and $P_m^*(y^*)$ respectively.
That is,
\[x^* = \sum_{j=1}^{m} x^*(j) e_j^*  \ \ \ \text{and}  \ \ \ y^* = \sum_{j=1}^{m} y^*(j) e_j^* .\]
 If $m>n$, $x^*(m)$ and $y^*(m)$ can be replaced by $sy^*(m)$ and $x^*(m)- y^*(m) +sy^*(m)$ without changing $\|x^*-y^*\|$, $\varphi(x^*)$ and $\varphi(y^*)$. On the other hand, if $n=m$ and $t\le s$, then, letting
\[x^*_1 = \sum_{j=1}^{n-1} x^*(j) e_j^* + s x^*(n)e_n^*,\] we have $\|x_1\|\ge1$, $\varphi(x^*)=\varphi(x_1^*)$ and $\|x_1^* - \varphi(y^*)\|\le \|x^* - y^*\|$. Hence we get
\begin{align*}
 \|\varphi(x^*) - \varphi(y^*) \|& = \|\varphi(x_1^*) - \varphi(y^*) \|\le  \|\varphi(x_1^*) -x_1^*\|+ \|x_1^* - \varphi(y^*)\|\\
&\le M^{-1}(\|x_1^*\|-1) + \|x^*-y^*\|\\
&\le 2M^{-1}(\|x^*-y^*\|),
\end{align*}
since $\|x_1^*\|\le \|\varphi(y^*) \| + \|\varphi(y^*) - x_1^*\|\le 1+  \|\varphi(y^*) - x_1^*\|$ and $t\le M^{-1}(t)$ for all $t\ge 0$.

Now, we will show that $\varphi$ is weak-$*$ continuous. Suppose that a net $\{x_\alpha^*\}$ converges weak-$*$ to $x^*$. Since the range of $\varphi$ is bounded and $X$ has the Schauder basis $\{e_j\}$, it is enough to check that $\lim_{\alpha} \inner{\varphi(x_\alpha^*), e_j} = \inner{\varphi(x^*), e_j}$ for all $j$.
Given $x^*\in X^*$, suppose first that there exists a unique $n$ such that
\[ \|\sum_{j=1}^{n-1} x^*(j) e_j^* \|<1 \ \ \ \ \text{and} \ \ \ \ \ \|\sum_{j=1}^{n} x^*(j) e_j^* \|\ge 1\] and
$ \varphi(x^*) = \sum_{j=1}^{n-1} x^*(j) e_j^* + tx^*(n) e_n^*$ for some $0<t\le 1$. Since $P_{n-1}^*(x^*_\alpha)$ converges to $P_{n-1}^*(x^*)$ in norm, it is clear that $\lim_{\alpha} \inner{ \varphi(x_\alpha^*), e_j} =\lim_{\alpha} \inner{x_\alpha^*, e_j}= x^*(j)$ for each $1\le j\le n-1$. Hence, we may assume that $\|P_{n-1}^*(x_\alpha^*)\|<1$ for all $\alpha$. We claim that $\lim_\alpha \varphi(x_\alpha^*)(j)=0=\varphi(x^*)(j)$ for all $j\ge n+1$. Otherwise, there exist a $j_0\ge n+1$, a subnet $(x_\beta^*)$ and an $\eps_0>0$ such that $|\varphi(x_\beta^*)(j_0)|\ge \eps_0$ for all $\beta$.
Then
\[ \eps_0 \le |\varphi(x_\beta^*)(j_0)|\le |x_\beta^*(j_0)|\to |x^*(j_0)|.\] Hence $\|P_{j_0}^*(x^*)\|>1$ and we may assume that $\|P_{j_0}^*(x_\alpha^*)\|>1$. So there exist $n\le n_\beta \le j_0$ such that for some $0<t_\beta \le 1$,
\[\varphi(x_\beta^*) = \sum_{j=1}^{n_\beta-1} x_\beta^*(j) e_j^* + t_\beta x_\beta^*(n_\beta) e_{n_\beta}^*. \]
Since $\varphi(x_\beta^*)(j_0)\neq 0$, we have $j_0 \le n_\beta$. So $n_\beta = j_0$ for all $\beta$. We may assume that $\lim_\beta t_\beta = t_0$. Then
\[ 1=\lim_\beta \|\varphi(x_\beta^*)\| =  \lim_\beta \left\|\sum_{j=1}^{j_0-1} x_\beta^*(j) e_j^* + t_\beta x_\beta^*(j_0) e_{j_0}^*\right\|=  \left\|\sum_{j=1}^{j_0-1} x^*(j) e_j^* + t_0 x^*(j_0) e_{j_0}^*\right\|.\]
Because $j_0\ge n+1$, we get $t_0=0$, which is a contradiction to  that $|t_{\beta}x_\beta^*(j_0)| = |\varphi(x_\beta^*)(j_0)|\ge \eps_0$ for all $\beta$.

We have only to show that $\lim_\alpha \varphi(x_\alpha^*)(n) =\varphi(x^*)(n)=tx^*(n)$. If $\|x^*_\alpha\|\le 1$ or $x_\alpha(n)=0$, then set $t_\alpha=1$.  If $\|x^*_\alpha\|>1$ and $x_\alpha^*(n)\neq 0$, then choose $0\le t_\alpha \le 1$ so that $\varphi(x_\alpha^*)=t_\alpha x^*_\alpha(n)$. So we have for all $\alpha$, $\varphi(x_\alpha^*)(n)=t_\alpha x_\alpha^*(n)$.
Notice that if $t_\alpha<1$, then
\[ \varphi(x_\alpha^*) = \sum_{j=1}^{n-1} x_\alpha^*(j)e_j^* + t_\alpha x_\alpha^*(n)e_n^*.\]

For any subnet $(x_\gamma)$, we can find a further subnet $(x_\beta)$ such that $\lim_\beta t_\beta=t_0$. Suppose first that $t_0<1$. Then we may assume that $t_\beta<1$ for all $\beta$. This means that
\[ 1=\lim_{\beta} \|\varphi(x_\beta^*)\| = \left\|\sum_{j=1}^{n-1} x^* (j)e_j^*+ t_0 x^*(n)e_n^*\right\|.\]
By the strict monotonicity, we get $t_0 = t$ and
\[\lim_\gamma \varphi(x_\gamma^*)(n) = \lim_\gamma t_\gamma x_\gamma^*(n) = tx^*(n) = \varphi(x^*)(n).\]
Secondly, suppose that $t_0=1$. Then we have
\[ 1 =\lim_\beta \|\varphi(x_\beta^*)\| \ge\lim_\beta  \left\|  \sum_{j=1}^{n-1} x_\beta^*(j)e_j^*+ t_\beta x_\beta^*(n)e_n^*  \right\| =  \left\|  \sum_{j=1}^{n-1} x^*(j)e_j^*+ x^*(n)e_n^*  \right\|\ge 1.   \]
This shows that $t=1$ and \[\lim_\beta \varphi(x_\beta^*)(n) =\lim_\beta t_\beta x_\beta^*(n) = x^*(n) =\varphi(x^*)(n).\]
Hence we conclude that $\lim_\alpha \varphi(x_\alpha^*)(n) = \varphi(x^*)(n)$.

Finally, suppose that $\|P_n^*(x^*)\|<1$ for all $n$. So, $\|x^*\|\le 1$. Fix $n\in \mathbb{N}$.  Then there exists $\alpha_n$ such that $\|P_n^*(x_\alpha^*)\|<1$ for all $\alpha\ge \alpha_n$. Hence this shows that
\[\lim_{\alpha} \inner{ \varphi(x_\alpha^*), e_j } = \lim_{\alpha} \inner{ x_\alpha^*, e_j } =  \inner{x^*, e_j} = \inner{\varphi(x^*), e_j}\] for all $j\le n$. Since the equality holds for arbitrary $n$, we get the desired result.
\end{proof}

\begin{example}
It is easy to check that  every $\ell_p$ $(1\le p<\infty)$  is uniformly monotone. There has been an extensive study about the uniform monotonicity of Orlicz-Lorentz spaces (c.f. \cite{FK, HN}).
\end{example}

Recall that the uniform complex convexity is equivalent to the uniform monotonicity on Banach lattices \cite{Lee, Lee2}. Hence we have the following.

\begin{corollary}\label{thm:monotone}
Suppose that a complex Banach space $X$ has a normalized unconditional Schauder basis $\{e_j\}$ with unconditional basis constant 1. If $X^*$ is uniformly complex convex, then $X^*$ admits a uniformly simultaneously continuous retraction.
\end{corollary}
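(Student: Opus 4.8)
The plan is to deduce this corollary directly from Theorem~\ref{thm:monotone} by transferring the hypothesis through the lattice equivalence stated in the preamble. The key observation is structural: since $X$ carries a normalized unconditional Schauder basis with unconditional basis constant~$1$, both $X$ and $X^*$ are Banach lattices (as already recorded at the start of this section). This is exactly the setting in which the cited equivalence between uniform complex convexity and uniform monotonicity applies.

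First I would invoke the equivalence from \cite{Lee, Lee2}: on a Banach lattice, uniform complex convexity holds if and only if the lattice is uniformly monotone. Applying this to the dual lattice $X^*$, the assumption that $X^*$ is uniformly complex convex yields that $X^*$ is uniformly monotone, say with modulus of monotonicity $M$. At this point every hypothesis of Theorem~\ref{thm:monotone} is met: $X$ has the prescribed unconditional basis, and $X^*$ is uniformly monotone.

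Then I would simply quote Theorem~\ref{thm:monotone} to conclude that $X^*$ admits a uniformly simultaneously continuous retraction (indeed one with modulus of continuity $2M^{-1}$, though the weaker conclusion without the explicit modulus is all that is asserted here). No further construction is required, since the retraction $\varphi$ built in the proof of Theorem~\ref{thm:monotone} already provides both the uniform norm-continuity and the weak-$*$ continuity.

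There is no genuine obstacle: the entire content of the corollary is the reduction of uniform complex convexity to uniform monotonicity, which is supplied by the lattice equivalence, after which the result is immediate. The only point requiring a word of care is confirming that the complexified framework still fits the Banach-lattice hypotheses of the equivalence, but this is guaranteed precisely because the unconditional basis constant equals~$1$, so the lattice structure on $X^*$ is canonical and the modulus $M$ is well defined.
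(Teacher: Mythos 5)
Your proposal is correct and follows exactly the paper's route: the paper derives this corollary in one line by citing the equivalence of uniform complex convexity and uniform monotonicity on Banach lattices from \cite{Lee, Lee2} and then applying Theorem~\ref{thm:monotone}. Nothing further is needed.
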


It is observed \cite{Ben1} that if $Y^*$ admits a ($f$-uniformly) simultaneously continuous retraction and $X$ is a norm-one complemented subspace of $Y$, so does $X^*$. Concerning the stability under the direct sum, it is shown that if we take $p_n =1-\frac1n$, and $X=\left[\bigoplus_n \ell_{p_n}\right]_1$, then $X^*$ does not admit a simultaneously continuous retraction. However we get the following affirmative result.

Now, we see some stability results. The following is clear and we omit the proof.

\begin{proposition}
Let $\{X_i\}_{i\in \mathbb{N}}$ be a family of Banach spaces  and let $X= \left[\bigoplus X_i\right]_{c_0}$ or $X=\left[\bigoplus X_i\right]_{\ell_p}$ for  $1\le p<\infty$. If $X^*$ admits a $f$-uniformly simultaneously continuous retraction $\varphi$, then each $X_i$ admits a $f$-uniformly simultaneously continuous retraction.
\end{proposition}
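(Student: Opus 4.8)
The plan is to exhibit each $X_i$ as a norm-one complemented subspace of $X$ and then to transport the retraction on $X^*$ down to $X_i^*$ by composing with the relevant adjoint maps, exactly as in the observation of Benyamini \cite{Ben1} recalled just above the statement.

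First I would fix the canonical inclusion $\iota_i \colon X_i \to X$ sending $x$ to the element supported on the $i$-th coordinate, together with the coordinate projection $P_i \colon X \to X_i$. For both $X = \left[\bigoplus X_i\right]_{c_0}$ and $X = \left[\bigoplus X_i\right]_{\ell_p}$ these satisfy $\|\iota_i\| = \|P_i\| = 1$ and $P_i \iota_i = \mathrm{Id}_{X_i}$, so that $X_i$ is norm-one complemented in $X$. Passing to adjoints, the operators $\iota_i^* \colon X^* \to X_i^*$ and $P_i^* \colon X_i^* \to X^*$ are each of norm one, are weak-$*$-to-weak-$*$ continuous (being adjoints of bounded operators), and satisfy $\iota_i^* P_i^* = \mathrm{Id}_{X_i^*}$; in particular $P_i^*$ is an isometric weak-$*$ continuous embedding carrying $B_{X_i^*}$ into $B_{X^*}$.

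Next I would set $\varphi_i = \iota_i^* \circ \varphi \circ P_i^* \colon X_i^* \to X_i^*$ and verify the required properties directly. Since $\varphi$ maps into $B_{X^*}$ and $\iota_i^*$ has norm one, $\varphi_i$ maps into $B_{X_i^*}$. For $z^* \in B_{X_i^*}$ we have $P_i^* z^* \in B_{X^*}$, whence $\varphi(P_i^* z^*) = P_i^* z^*$ and therefore $\varphi_i(z^*) = \iota_i^* P_i^* z^* = z^*$, so $\varphi_i$ is a retraction onto $B_{X_i^*}$. Its weak-$*$ continuity is immediate, being a composition of three weak-$*$ continuous maps.

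Finally, for the modulus estimate I would use that $\iota_i^*$ and $P_i^*$ are $1$-Lipschitz, giving $\omega_{\varphi_i}(t) \le \omega_\varphi(t) \le f(t)$, so $\varphi_i$ is indeed an $f$-uniformly simultaneously continuous retraction. I do not expect any genuine obstacle: the entire content is the norm-one complementation of each summand, and the one point deserving a word of care is checking that the composition does not inflate the modulus of continuity, which follows at once from the $1$-Lipschitz property of the two adjoint maps. This is why the authors declare the statement clear and omit the proof.
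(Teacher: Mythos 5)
Your argument is correct and is precisely the one the paper has in mind: the authors omit the proof because the statement follows from the observation, recalled a paragraph earlier, that a norm-one complemented subspace inherits an $f$-uniformly simultaneously continuous retraction, and your construction $\varphi_i = \iota_i^*\circ\varphi\circ P_i^*$ is exactly how that observation is implemented. All the verifications (retraction property, weak-$*$ continuity via adjoints, and the $1$-Lipschitz maps not inflating the modulus) check out.
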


\begin{proposition}\label{prop:product}
Let $\{X_j\}_{j\in J}$ be a family of Banach spaces and let $X= \left[\bigoplus X_n\right]_1$. Suppose that each $X^*_j$ admits a uniformly simultaneously continuous retraction $\varphi_j$. If
\[\lim_{\eps\to 0+}\sup_{j\in J}\omega_{\varphi_j}(\eps)=0,\] then $X^*$ admits a uniformly simultaneously continuous retraction. In particular, a finite $\ell_1$ sum of Banach spaces whose duals admits uniformly simultaneously continuous retractions also admits uniformly simultaneously continuous retraction.
\end{proposition}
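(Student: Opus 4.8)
The plan is to build the retraction on $X^*$ coordinatewise from the given $\varphi_j$. Since $X = \left[\bigoplus X_j\right]_{\ell_1}$, its dual is the $\ell_\infty$-sum $X^* = \left[\bigoplus X_j^*\right]_{\ell_\infty}$, so each $x^*\in X^*$ is a bounded family $(x_j^*)_{j\in J}$ with $\|x^*\| = \sup_j\|x_j^*\|$ and duality $\inner{x^*,u} = \sum_j \inner{x_j^*,u_j}$ against $u=(u_j)\in X$. I would set
\[\varphi(x^*) = (\varphi_j(x_j^*))_{j\in J}.\]
Since $\|\varphi_j(x_j^*)\|\le 1$ for every $j$, the family $\varphi(x^*)$ lies in $B_{X^*}$; and if $x^*\in B_{X^*}$ then $\|x_j^*\|\le 1$ for all $j$, whence $\varphi_j(x_j^*)=x_j^*$ and $\varphi$ is the identity on $B_{X^*}$. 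Thus $\varphi$ maps $X^*$ into $B_{X^*}$ and fixes it, and it remains to verify the two continuity properties.

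For uniform norm continuity I would use $\|x_j^*-y_j^*\|\le\|x^*-y^*\|$ together with the monotonicity of each $\omega_{\varphi_j}$ to obtain
\[\|\varphi(x^*)-\varphi(y^*)\| = \sup_j\|\varphi_j(x_j^*)-\varphi_j(y_j^*)\| \le \sup_j\omega_{\varphi_j}(\|x^*-y^*\|).\]
Hence $\omega_\varphi(t)\le\sup_j\omega_{\varphi_j}(t)$, and the hypothesis $\lim_{t\to 0+}\sup_j\omega_{\varphi_j}(t)=0$ forces $\omega_\varphi(t)\to 0$ as $t\to 0+$. This is precisely the step in which the uniform control of the moduli is indispensable, and it is what separates the present case from the failing example $\left[\bigoplus_n\ell_{p_n}\right]_1$ recalled before the statement; I regard this as the conceptual crux.

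For weak-$*$ continuity, let $x^\alpha\to x^*$ weak-$*$ in $X^*$. Testing against vectors supported on a single coordinate shows $x_j^\alpha\to x_j^*$ weak-$*$ in $X_j^*$ for each fixed $j$, and the weak-$*$ continuity of $\varphi_j$ then gives $\varphi_j(x_j^\alpha)\to\varphi_j(x_j^*)$ weak-$*$. Since $\|\varphi(x^\alpha)\|\le 1$ for all $\alpha$, it suffices to upgrade this coordinatewise weak-$*$ convergence to genuine weak-$*$ convergence of the bounded net $\{\varphi(x^\alpha)\}$, and I expect this interchange of limit and summation to be the main technical point. The mechanism is a tail estimate: given $u=(u_j)\in X$ and $\eta>0$, choose a finite $F\subset J$ with $\sum_{j\notin F}\|u_j\|<\eta$; the sum over $F$ converges by coordinatewise convergence, while the tail is bounded by $2\sum_{j\notin F}\|u_j\|<2\eta$ because every $\varphi_j(x_j^\alpha)$ and $\varphi_j(x_j^*)$ has norm at most $1$. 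This yields $\inner{\varphi(x^\alpha),u}\to\inner{\varphi(x^*),u}$, proving weak-$*$ continuity. Finally, for a finite sum the supremum over $j$ is a maximum of finitely many functions each tending to $0$, so the modulus hypothesis holds automatically and the last assertion follows.
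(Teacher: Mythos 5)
Your proposal is correct and follows exactly the paper's route: the paper defines the same coordinatewise map $\varphi(x^*)=(\varphi_j(x_j^*))_{j\in J}$ on $X^*=\left[\bigoplus X_j^*\right]_{\ell_\infty}$ and simply asserts that the continuity properties are easy to check. Your verification of the two properties --- the estimate $\omega_\varphi(t)\le\sup_j\omega_{\varphi_j}(t)$ for uniform norm continuity and the finite-support-plus-tail argument for weak-$*$ continuity of the bounded net --- supplies precisely the details the paper omits, and both are sound.
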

\begin{proof}
For each $x^*\in X^*$, define $\varphi(x^*) = (\varphi_{j}(x^*))_{j\in J}$. Then it is easy to check that $\varphi$ is uniformly norm-continuous and weak-$*$ continuous.
\end{proof}

We do not know that the similar result of \label{prop:product} holds for $c_0$ or $\ell_p$ sums for $1<p<\infty$. However, we provide a positive result for separable uniformly smooth spaces. Recall that a Banach space $X$ is said to be uniformly convex if the modulus of convexity 
\[ \delta_X(\eps) = \inf \left\{  1- \left\| \frac{x+y }{2}\right\|  : x, y \in S_X, \ \  \text{and} \ \ \|x-y\|\ge \eps \right\}
\] is positive for all $0<\eps<1$. A Banach space $X$ is uniformly smooth if and only if $X^*$ is uniformly convex. In the proof, we will use the following lemma.

\begin{lemma}[\mbox{\cite[Lemma~3.3]{AAGM2}}]\label{elementary}
Let $\{c_n\}$ be a sequence of complex numbers with $|c_n|\leq 1$ for every $n$, and let $\eta>0$ be such that for a convex series $\sum \alpha_n$, $\re \sum_{n=1}^\infty \alpha_n c_n >1-\eta$. Then for every $0<r<1$, the set $A : = \{ i \in \mathbb{N} : \re c_i > r \}$, satisfies the estimate
\[ \sum_{i\in A} \alpha_i \geq 1-\frac{\eta}{1-r}.\]
\end{lemma}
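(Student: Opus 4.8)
The plan is to partition the index set and estimate the real part of the series separately on $A$ and on its complement. Write $B = \N \setminus A = \{ i \in \N : \re c_i \le r\}$, and recall that for a convex series the coefficients satisfy $\alpha_n \ge 0$ and $\sum_n \alpha_n = 1$. Since the $\alpha_n$ are nonnegative reals, I would first move the real part inside the sum:
\[ \re \sum_{n=1}^\infty \alpha_n c_n = \sum_{n=1}^\infty \alpha_n \re c_n = \sum_{i \in A} \alpha_i \re c_i + \sum_{i \in B} \alpha_i \re c_i. \]

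Next I would bound each piece. On $A$ the constraint $|c_i| \le 1$ gives $\re c_i \le 1$, so $\sum_{i \in A} \alpha_i \re c_i \le \sum_{i \in A} \alpha_i$; on $B$ the defining inequality $\re c_i \le r$ together with $\alpha_i \ge 0$ gives $\sum_{i \in B} \alpha_i \re c_i \le r \sum_{i \in B} \alpha_i$. Setting $a = \sum_{i \in A} \alpha_i$ and using $\sum_{i \in B} \alpha_i = 1 - a$, the hypothesis $\re \sum_n \alpha_n c_n > 1 - \eta$ then reads
\[ 1 - \eta < a + r(1 - a) = (1-r)a + r. \]
Rearranging gives $(1-r)a > (1 - r) - \eta$, and since $0 < r < 1$ I may divide by $1 - r$ to obtain $a > 1 - \eta/(1-r)$, which is even slightly stronger than the asserted inequality.

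As for the main obstacle: there is really none of substance here, since the lemma is a one-line convexity estimate once the split into $A$ and $B$ is made. The only points requiring a moment's care are that for indices in $B$ the quantity $\re c_i$ may be negative, yet the inequality $\alpha_i \re c_i \le r\, \alpha_i$ still holds because $\alpha_i \ge 0$ and $\re c_i \le r$; and that the final division is legitimate precisely because the hypothesis restricts attention to $r < 1$.
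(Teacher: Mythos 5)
Your proof is correct: the split into $A$ and its complement, the bounds $\re c_i \le 1$ on $A$ (from $|c_i|\le 1$) and $\re c_i \le r$ on $B$, and the final rearrangement are exactly the standard argument for this estimate, and you even obtain the slightly stronger strict inequality. The paper itself gives no proof, quoting the lemma directly from \cite[Lemma~3.3]{AAGM2}, where the argument is essentially the one you wrote.
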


\begin{theorem}
 Let $X= \left[\bigoplus X_i\right]_{c_0}$, where $X_i$'s are Banach spaces and let $\delta_i(\eps)$ be  modulus of convexity of $X_i^*$.  Suppose that each space $X_i$ is separable  and $\inf_i \delta_i(\eps)>0$  for all $0<\eps<1$.  Then, $X^*$ admits a uniformly simultaneous continuous retraction.
\end{theorem}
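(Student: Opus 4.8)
The plan is to write $X^*=\bigl[\bigoplus X_i^*\bigr]_{\ell_1}$ and to assemble the retraction from retractions of the individual factors. Since each $X_i^*$ is uniformly convex it is reflexive, and as $X_i$ is separable and reflexive, $X_i^*$ is separable as well; thus by Benyamini's theorem each $X_i^*$ admits a retraction $\varphi_i\colon X_i^*\to B_{X_i^*}$ that is weak-$*$ (=weak) continuous and uniformly norm-continuous. The role of the hypothesis $\inf_i\delta_i(\eps)>0$ is to render this quantitative: the $\varphi_i$ can be chosen with a common modulus of continuity $\omega$ with $\omega(t)\to0$ as $t\to0$. As in the proof of Theorem~\ref{thm:monotone}, since any candidate retraction has range in the bounded set $B_{X^*}$ and $X$ has $c_0$-structure, weak-$*$ continuity reduces to coordinatewise weak continuity: $x_\alpha^*\to x^*$ weak-$*$ must force $\varphi(x_\alpha^*)_i\to\varphi(x^*)_i$ weakly in $X_i^*$ for every $i$, the tail being negligible because $\sup_i\|y_i\|\to0$ for $y=(y_i)\in X$.

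I would look for $\varphi$ of the form $\varphi(x^*)_i=b_i\,\varphi_i(x_i^*/b_i)$, where $(b_i)=(b_i(x^*))$ is a nonnegative budget profile with $\sum_i b_i\le1$, equal to the norm profile when $\|x^*\|\le1$ (so $\varphi$ is the identity on $B_{X^*}$) and with $\sum_i b_i=1$ when $\|x^*\|>1$. The decisive point is that the factor retractions absorb all \emph{within-coordinate} weak escape automatically: if $b_{i,\alpha}\to\beta_i$ with $\beta_i\ge\|x_i^*\|$ and $x_{i,\alpha}^*\to x_i^*$ weakly, then $b_{i,\alpha}\varphi_i(x_{i,\alpha}^*/b_{i,\alpha})\to\beta_i\varphi_i(x_i^*/\beta_i)=x_i^*$, regardless of whether the $i$-th coordinate merely loses norm or weakly fades to $0$. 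Thus the whole difficulty is transferred onto the scalar budget profile, which must be weak-$*$ continuous, must dominate the norm of the weak limit in each coordinate, and must sum to $1$ on $\{\|x^*\|\ge1\}$ --- three requirements that together force $b_i=\|x_i^*\|$ on the unit sphere. I would produce $(b_i)$ by a Benyamini-type cutoff along a fixed enumeration of the index set (for an uncountable set one first reduces to countable support, every $\ell_1$-element having countable support): retain the leading coordinates, assign the residual mass to the critical coordinate, and annihilate the tail. The ordering is exactly what copes with \emph{index escape}, i.e. mass sliding to coordinates of ever larger index, which $c_0$--$\ell_1$ duality sends weak-$*$ to $0$.

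Uniform norm continuity should then be routine, following from $\ell_1$-additivity of the norm, the Lipschitz behaviour of the scalar cutoff, and the common modulus $\omega$ of the $\varphi_i$. The main obstacle is weak-$*$ continuity, that is, weak-$*$ continuity of the budget profile at sphere points. The difficulty is intrinsic: neither $\|x^*\|=\sum_i\|x_i^*\|$ nor the coordinate norms $\|x_i^*\|$ are weak-$*$ continuous, so a priori the cutoff index and the critical budget jump, and a naive cutoff (or a naive proportional rescaling) fails precisely when an active coordinate loses norm under the weak limit. Here I expect uniform convexity to enter twice. The Kadec--Klee property of the factors upgrades coordinatewise weak convergence to norm convergence on those coordinates where the retracted vector stays near its own sphere --- which is exactly where the budget is being spent --- and this forces $b_{i,\alpha}\to\|x_i^*\|$ along nets converging to the sphere. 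Simultaneously I would apply Lemma~\ref{elementary}: pairing $x^*$ with a near-norming $y\in X$ shows that, near the sphere, all but an $\eps$-fraction of the $\ell_1$-mass of $\varphi(x^*)$ concentrates on a fixed finite set of coordinates, so the cutoff effectively takes place in a finite $\ell_1$-sum where weak and norm topologies agree and the scalar argument of Theorem~\ref{thm:monotone} applies; the uniform bound $\inf_i\delta_i(\eps)>0$ is what keeps this finite set, and the accompanying estimates, independent of $i$. Combining the two inputs, the tail contributes nothing in the limit, the finitely many active budgets converge, and $\varphi(x_\alpha^*)_i\to\varphi(x^*)_i$ weakly for every $i$, yielding the required weak-$*$ continuity.
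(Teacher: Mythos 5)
You have correctly identified the crux: since $X^*=\left[\bigoplus X_i^*\right]_{\ell_1}$, the coordinatewise recipe of Proposition~\ref{prop:product} is unavailable and some ``budget'' summing to at most $1$ must be distributed among the coordinates; your reduction of weak-$*$ continuity to coordinatewise weak convergence of bounded nets is also correct. But the construction you propose for the budget --- a cutoff along a fixed enumeration of the \emph{index set}, retaining leading coordinates, truncating the critical one and killing the tail --- cannot be made weak-$*$ continuous, and the two tools you invoke to repair it do not apply. Concretely, take every $X_i=\ell_2$, so $X^*=\left[\bigoplus \ell_2\right]_{\ell_1}$, and consider $x_n^*=(e_n,e_1,0,0,\dots)$, where $(e_k)$ is the orthonormal basis of $\ell_2$. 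Your cutoff spends the entire budget on coordinate $1$ (whose norm is already $1$), so $\varphi(x_n^*)=(e_n,0,0,\dots)\to 0$ weak-$*$, while $x_n^*\to(0,e_1,0,\dots)\in B_{X^*}$ weak-$*$, a fixed point of any retraction. The Kadec--Klee property cannot rescue this, because the hypothesis it needs --- convergence of the coordinate norms to the norm of the weak limit --- is exactly what fails ($\|e_n\|\equiv 1\not\to 0$); and Lemma~\ref{elementary} concentrates the mass of $\varphi(y^*)$ on finitely many \emph{indices} $i$, which cannot detect mass evaporating weakly \emph{inside} a single infinite-dimensional factor $X_i^*$ (in the paper that lemma is used only for the norm-modulus estimate, not for weak-$*$ continuity). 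Finally, your claim that the factor retractions ``absorb all within-coordinate weak escape'' presupposes that the limiting budget of each coordinate dominates $\|x_i^*\|$ for every $i$, which is impossible when $\|x^*\|>1$, since the budgets sum to $1<\sum_i\|x_i^*\|$.

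The missing idea --- and the actual mechanism of the paper's proof --- is to perform the cutoff not along the index set but along an exhaustion of $X$ by finite-dimensional subspaces $E_1\subset E_2\subset\cdots$ obtained by diagonally interleaving finite-dimensional pieces $E_i^n\subset X_i$ (this is where separability of each $X_i$ enters), so that the exhaustion grows simultaneously across the coordinates and within each coordinate. One sets $n(x^*)=\inf\{k:\|R_k^*x^*\|\ge 1\}$, interpolates on $E_{n(x^*)}$ to land exactly on the unit sphere, and extends by the \emph{unique} Hahn--Banach extension, whose uniqueness comes from the strict convexity of each $X_i^*$ together with the $\ell_1$-additivity of the dual norm. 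Because restriction to a finite-dimensional $E_k$ turns weak-$*$ convergence into norm convergence, this construction treats index escape and within-coordinate weak escape on exactly the same footing --- precisely what a budget profile depending only on the coordinate norms $\|x_i^*\|$ cannot do; the uniform lower bound $\inf_i\delta_i(\eps)>0$ is then what produces the quantitative modulus via Lemma~\ref{elementary}. To salvage your scheme you would have to let the budget depend on finite-dimensional truncations of each $x_i^*$ rather than on $\|x_i^*\|$, at which point you have essentially reconstructed the paper's argument.
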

\begin{proof}
For each $i\in \mathbb{N}$ there exists a sequence of finite-dimensional subspaces $E^1_i\subset E^2_i\subset E^3_i \subset ...$ such that $\dim E^n_i=n$ and $\bigcup_{n=1}^\infty E_i^{n}$ is dense in $X_i$. Let $e_i$ be the standard basis of $c_0$ which ensures  that $\bigcup_{i=1}^\infty (X_i\otimes e_i)$ is dense in $X$, where 
\[ X_i \otimes e_i = \{ x\otimes e_i : x\in X_i\}.\]

For each $i,j\in \mathbb{N}$, we define a sequence of spaces 
$$E_k=\left(\cup_{p+q<i+j} E^p_q \otimes e_q\right)\cup\left( \cup_{q\leq j} E^q_{i+j-q}\otimes e_{i+j-q}\right)$$ where $k={{(i+j-1)(i+j-2)}\over{2}}+j$. We clearly see that $E_k\subset E_{k+1}$ for every $k\in \mathbb{N}$.

Let $R_k~:~E_k\longrightarrow X$ be a natural embedding (for the convenience, we set  $E_0=\{0\}$ and $R_0~:~\{0\}\longrightarrow X$). By the uniform convexity, it is easy to check that there is a unique Hahn-Banach extension of every element of $E_k^*$ to $X^*$. So let  $H_k~:~{E_k}^*\longrightarrow {X}^*$ be the map defined by the Hahn-Banach extension theorem.

We also define a map $\psi_k~:~{E_k}^*\longrightarrow {E_{k+1}}^*$ by $\psi_k={R_{k+1}}^*\circ H_k$. For each $x^*$, let $n(x^*)=\text{inf}\{k~:~\|{R_k}^*x^*\|\ge 1\}$, where we use the convention that  $\inf \emptyset = \infty$.

We define a retraction $\phi:X^*\longrightarrow B_{X^*}$. If $\|x^*\|\leq 1$, then $\phi(x^*)=x^*$. If $\|x^*\|> 1$ and $n(x^*)=1$, then we put $\phi(x^*)=H_1({R_1}^*x^*/\|{R_1}^*x^*\|)$. We assume that $\|x^*\|>1$ and $n(x^*)>1$. For the convenience we write $n=n(x^*)$. Since ${R_{n}}^* x^*|_{E_{n-1}}=\psi_{n-1}({R_{n-1}}^*x^* )|_{E_{n-1}}=x^* |_{E_{n-1}}$, we have $\|{R_{n-1}}^* x^*\|=\|\psi_{n-1}({R_{n-1}}^*x^* )\|<1$. Hence, there exists a unique $0<\lambda\leq 1$ such that $\|\lambda{R_n}^*x^*+(1-\lambda)\psi_{n-1}({R_{n-1}}^*x^*)\|=1$. We put $\phi(x^*)=H_n(\lambda{R_n}^*x^*+(1-\lambda)\psi_{n-1}({R_{n-1}}^*x^*))$.

We now show that a retraction $\phi$ is weak-$*$ continuous. Suppose that  $(x^*_\alpha)$ converges to $x^*$ in the weak-$*$ topology.

  First assume that $n=n(x^*)<\infty$. Since ${R_{n}}^*x^*_\alpha$ converges to ${R_{n}}^* x^*$ in norm, we have ${R_{n}}^*\phi(x^*_\alpha)$ converges to ${R_{n}}^*\phi(x^*)$ in norm. This implies that every weak-$*$ limit point of a net $(\phi(x^*_\alpha))$ is an extension of ${R_{n}}^*\phi(x^*)$. Since $\|{R_n}^*\phi(x^*)\|=1= \|\phi(x^*)\|$ and the Hahn-Banach extension  is unique, $\phi(x^*_\alpha)$ weak-$*$ converges to $\phi(x^*)$.  
 On the other hand, assume $\|{R_n}^*x^*\|< 1$ for every $n\in \mathbb{N}$. Since the net $(\phi(x^*_\alpha))$ is bounded, we have only to show that $\phi(x_\alpha^*)(x)$ converges to $\phi(x^*)(x)$ for all $x\in E_n$ and for all $n\ge 1$. Fix $N$. Then ${R_N}^*x^*_\alpha$ converges to ${R_N}^*x^*$  in norm and there exists $\alpha_0$ such that  $\|R_N^*x^*_\alpha\|<1$ for all $\alpha>\alpha_0$ and $\phi(x_\alpha^*)$ is an extension of $R_N^*x^*_\alpha$ for all $\alpha>\alpha_0$. That is, $\phi(x^*_\alpha)(x)= ({R_N}^*x^*_\alpha)(x)$ for each $\alpha>\alpha_0$ and $x\in E_N$. Hence $\phi(x^*_\alpha)(x)$ converges to $\phi(x^*)(x)$ for all $x\in E_N$. Because $N$ is arbitrary,  $\phi(x^*_\alpha)$ converges to $\phi(x^*)$ in the weak-$*$ topology.\\

We calculate the norm-modulus of continuity of $\phi$. For $\epsilon>0$, we fix $x^*$, $y^*$ $\in X^*$ satisfying $\|x^*-y^*\|<\delta(\epsilon)^2$, and let $n=n(x^*)\leq n(y^*)=m$. If $n=\infty$, then it is clear. So assume first that $n\le m<\infty$.

Without loss of generality, we assume that $\phi(y^*)$ is an extension of ${R_n}^*y^*$. Indeed, if $n<m$, then this follows from the definition of $\phi$. On the other hand, if $n=m$, then we choose $u^*\in X^*$ which annihilates $E_{n-1}$. Since ${R_n}^*y^*-\psi_{n}({R_{n-1}}^*y^*)$ and ${R_n}^*x^*-\psi_{n}({R_{n-1}}^*x^*)$ both annihilate $E_{n-1}$, we see that they are multiples of ${R_{n}}^*u^*$. This fact and the convexity of $\|\cdot\|$ imply  that there exists $\alpha$ so that either
$$\|{R_n}^*(y^*+\alpha u^*)\|=1 \text{~and~} \|{R_n}^*(x^*+\alpha u^*)\|\geq 1~\text{~or}$$ 
$$\|{R_n}^*(y^*+\alpha u^*)\|\geq 1 \text{~and~} \|{R_n}^*(x^*+\alpha u^*)\|= 1.$$

Hence, we assume $\|{R_n}^*(y^*+\alpha u^*)\|=1$ and $\|{R_n}^*(x^*+\alpha u^*)\|\geq 1$. (otherwise, we change the role of $x^*$ and $y^*$.) We now take $x^*+\alpha u^*$ and $y^*+\alpha u^*$ instead of $x^*$ and $y^*$.\\

For any element $z$ in a space of vector valued sequence like $X$ and $X^*$, we write $z=(z(1),z(2),...)$. Choose $x\in S_{E_n}$ so that ${R_n}^*\phi(x^*)(x)=1$, then we see that $1={{{R_n}^*\phi(x^*)(i)}\over{\|{R_n}^*\phi(x^*)(i)\|}}(x(i))={{{R_n}^*\phi(x^*)(i)}\over{\|\phi(x^*)(i)\|}}(x(i))$ for every $i\in C$, where $C= \{ i : {R_n}^*\phi(x^*)(i)\neq 0\}$.

From the definition of $\phi$, we have $\re {R_n}^*(x^*)(x)\geq 1$, and so 
\begin{align*}
1-\delta(\epsilon)^2
&<\re \big({R_n}^*(x^*)\big)(x)-\|{R_n}^*(x^*-y^*)\|\\
&\leq \re {R_n}^*(y^*)(x)=\sum \re {R_n}^*(y^*)(i)(x(i))
\end{align*}
Define a set $A=\left\{i~:~\re {{{R_n}^*(y^*)(i)}\over{\|\phi(y^*)(i)\|}}(x(i))>1-\delta(\epsilon),~\|\phi(y^*)(i)\|\neq 0\right\}$. Then, Lemma~\ref{elementary} shows that 
$$\sum_{A} \|\phi(y^*)(i)\|>1-\delta(\epsilon),\ \ \text{~and~}\ \ \sum_{A^c} \|\phi(y^*)(i)\|<\delta(\epsilon).$$

Since $\phi(y^*)$ is an extension of ${R_n}^*y^*$, for each $i\in A\cap C$, we get
\begin{align*}
\left\|{{\phi(y^*)(i)}\over{\|\phi(y^*)(i)\|}}+ {{\phi(x^*)(i)}\over{\|\phi(x^*)(i)\|}}\right\|
&\geq{{{R_n}^*(y^*)(i)}\over{\|\phi(y^*)(i)\|}}(x(i))+ {{{R_n}^*\phi(x^*)(i)}\over{\|\phi(x^*)(i)\|}}(x(i))\\
&>2-\delta(\epsilon)
\end{align*}
and so, 
$$\left\|{{\phi(y^*)(i)}\over{\|\phi(y^*)(i)\|}}-{{\phi(x^*)(i)}\over{\|\phi(x^*)(i)\|}}\right\|<\epsilon.$$
Moreover, for each $i\in A\cap C$,
\begin{align*}
\|\phi (y^*)(i)-\phi(x^*)(i)\|
&=\left\|{{\phi(y^*)(i)}\over{\|\phi(y^*)(i)\|}}-{{\phi(x^*)(i)}\over{\|\phi(y^*)(i)\|}}\right\|\|\phi(y^*)(i)\|\\
&<\left(\left\|{{\phi(y^*)(i)}\over{\|\phi(y^*)(i)\|}}-{{\phi(x^*)(i)}\over{\|\phi(x^*)(i)\|}}\right\|+\left\|{{\phi(x^*)(i)}\over{\|\phi(x^*)(i)\|}}-{{\phi(x^*)(i)}\over{\|\phi(y^*)(i)\|}}\right\|\right)\\
&\text{~}~\cdot\|\phi(y^*)(i)\|\\
&<\epsilon \|\phi(y^*)(i)\|+\big|\|\phi(y^*)(i)\|-\|\phi(x^*)(i)\|\big|.
\end{align*}
So we have for all $i\in A$,
\[ \|\phi (y^*)(i)-\phi(x^*)(i)\| <\epsilon \|\phi(y^*)(i)\|+\big|\|\phi(y^*)(i)\|-\|\phi(x^*)(i)\|\big|.\]

On the other hand, the assumption $\|x^*-y^*\|<\delta(\epsilon)^2$ implies that $\|{R_n}^*x^*-{R_n}^*y^*\|<\delta(\epsilon)^2$, and so $\sum \big|\|{R_n}^*x^*(i)\|-\|{R_n}^*y^*(i)\|\big|\leq\sum \|{R_n}^*x^*(i)-{R_n}^*y^*(i)\|<\delta(\epsilon)^2$. Since $\|{R_n}^*y^*(i)\|\leq\|\phi(y^*)(i)\|$, $\|{R_n}^*x^*\|\geq 1$, and $\|\phi(y^*)\|=1$, we have,  setting $P = \{i : \|\phi(y^*)(i)\|\ge  \|{R_n}^*x^*(i)\|\}$ and $Q= \{\|\phi(y^*)(i)\|< \|{R_n}^*x^*(i)\|\}$, 
\begin{align*}
\sum\big|\|\phi(y^*)(i)\|&-\|{R_n}^*x^*(i)\|\big|\\
&= \sum_{P}\left( \|\phi(y^*)(i)\|-\|{R_n}^*x^*(i)\| \right) + \sum_Q \left(  \|{R_n}^*x^*(i)\|- \|\phi(y^*)(i)\|\right)\\
 & =
  1- \sum_{Q} \|\phi(y^*)(i)\|-\sum_P\|{R_n}^*x^*(i)\|  + \sum_Q \left(  \|{R_n}^*x^*(i)\|- \|\phi(y^*)(i)\|\right) \\
  &\le \sum_Q\|{R_n}^*x^*(i)\|- \sum_{Q} \|\phi(y^*)(i)\|  + \sum_Q \left(  \|{R_n}^*x^*(i)\|- \|\phi(y^*)(i)\|\right)\\
  &\le 2\sum_Q \left(  \|{R_n}^*x^*(i)\|- \|R_n^*y^*(i)\|\right) <2\delta(\eps)^2.
\end{align*}
Notice also that  $R_n^*x^*$ and $R_{n-1}^*x^*$ may have only one different term. Suppose that this different term is $n_1$th term of $R_n^*x^*$. Then  $\|R_{n-1}^*x^*(i)\|= \|R_n^*x^*(i) \| = \|\phi(x^*)(i)\|$ for all $i \neq n_1$. Therefore we have  $\sum_{i\neq n_1}\big|\|\phi(y^*)(i)\|-\|\phi(x^*)(i)\|\big|<2\delta(\epsilon)^2$.

 Since $\sum \|\phi(y^*)(i)\|=\sum\|\phi(x^*)(i)\|=1$, we have $\big|\|\phi(y^*)(n_1)\|-\|\phi(x^*)(n_1)\|\big|<2\delta(\epsilon)^2$. Moreover, the fact  that $\sum\big|\|\phi(y^*)(i)\|-\|\phi(x^*)(i)\|\big|<4\delta(\epsilon)^2$ shows
 \begin{align*}
 \sum_{A^c}\|\phi(x^*)(i)\|
 &\leq \sum_{A^c} \|\phi(x^*)(i)-\phi(y^*)(i)\|+ \sum_{A^c}\|\phi(y^*)\|\\
 &< 4\delta(\epsilon)^2+\delta(\epsilon).
 \end{align*}
Hence, we deduce that
\begin{align*}
\|\phi(x^*)-\phi(y^*)\|
&=\sum_A \|\phi (y^*)(i)-\phi(x^*)(i)\|+\sum_{A^c}\|\phi (y^*)(i)-\phi(x^*)(i)\|\\
&\leq \sum_A \epsilon \|\phi(y^*)(i)\|+ \sum_A\big|\|\phi(y^*)(i)\|-\|\phi(x^*)(i)\|\big|\\
&~\text{~}~+\sum_{A^c}\|\phi (y^*)(i)\|+\sum_{A^c}\|\phi (x^*)(i)\|\\
&<\epsilon+4\delta(\epsilon)^2+4\delta(\epsilon)^2+\delta(\epsilon)+\delta(\epsilon)\\
&=\epsilon+8\delta(\epsilon)^2+2\delta(\epsilon).
\end{align*} 

Finally, assume that $n<m=\infty$. In this case, $\|y^*\|\le 1$. If $\|x^*\|\le 1$, then the desired result clearly holds. So assume that $\|x^*\|>1$.  Let $y_t^* =  tx^* + (1-t)x^*$ and let
\[ t_0 = \sup \{ 0<t<1: \|y^*_t\|=1\}.\] It is clear that $0\le t_0<1$. For each $t_0<s<1$, $\|y^*_s\|>1$ and
$\|x^* - y^*_s\| \le  \|x^*- y^*\|<\delta(\eps)^2$. From the previous result, we have
\[ \|\phi(x^*) - \phi(y_s)\| < \eps + 8\delta(\eps)^2 + 2\delta(\eps).\]
Since $y_s^*$ converges to $y_{t_0}$ as $s$ tends to $t_0$, the weak-$*$ continuity of $\phi$ shows that
\[ \|\phi(x^*) -\phi(y_{t_0})\| \le \eps + 8\delta(\eps)^2 + 2\delta(\eps).\] Since $\|y_{t_0}\|\le 1$, we have
\[ \|\phi(x^*) -\phi(y)\| \le \|\phi(x^*) -\phi(y_{t_0})\| + \|y_{t_0}^*  - y^* \|  \le \eps + 9\delta(\eps)^2 + 2\delta(\eps).\]
This completes the proof.
\end{proof}

For $1<p<\infty$, $\ell_p$ sum of a countable family of separable uniformly convex spaces with uniformly lower bounded moduli of convexity is separable uniformly convex \cite{Day}, we get the following.

\begin{corollary}
Let $\{X_i\}_{i\in \mathbb{N}}$ be a family of Banach spaces whose dual spaces are separable uniformly convex with moduli of convexity  $\delta_i(\eps)$ such that $\inf_i \delta_i(\eps)>0$ for all $0<\eps<1$ and let $X= \left[\bigoplus X_i\right]_{c_0}$ or $X=\left[\bigoplus X_i\right]_{\ell_p}$ for $1\le p<\infty$. Then, $X^*$ admits a uniformly simultaneous continuous retraction.
\end{corollary}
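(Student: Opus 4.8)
The plan is to treat the three possibilities for $X$ separately and to reduce each to a result already established.

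When $X=\left[\bigoplus X_i\right]_{c_0}$ there is nothing new to do. Separability of $X_i^*$ forces $X_i$ to be separable, the function $\delta_i$ is by hypothesis the modulus of convexity of $X_i^*$, and $\inf_i\delta_i(\eps)>0$ for $0<\eps<1$; so the theorem on $c_0$-sums proved above applies verbatim and $X^*$ admits a uniformly simultaneously continuous retraction.

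When $X=\left[\bigoplus X_i\right]_{\ell_p}$ with $1<p<\infty$ I would argue by duality. Writing $1/p+1/p'=1$ we have $X^*=\left[\bigoplus X_i^*\right]_{\ell_{p'}}$ with $1<p'<\infty$, and since each $X_i^*$ is separable uniformly convex with $\inf_i\delta_i(\eps)>0$, the result of Day \cite{Day} quoted above shows that this $\ell_{p'}$-sum is itself separable and uniformly convex. Benyamini's theorem \cite{Ben1}, recalled in the introduction, then produces a uniformly simultaneously continuous retraction on the separable uniformly convex space $X^*$ directly.

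The case $X=\left[\bigoplus X_i\right]_{\ell_1}$ is the one I expect to carry the real difficulty, since here $X^*$ is an $\ell_\infty$-sum and is not uniformly convex, so neither of the previous reductions is available. Instead I would use Proposition~\ref{prop:product}. Applying the $c_0$-sum theorem above to each one-element family $\{X_i\}$ yields, for every $i$, a uniformly simultaneously continuous retraction $\varphi_i$ on $X_i^*$; crucially, the norm-modulus computation in that proof gives the explicit estimate $\omega_{\varphi_i}(t)\le \eps+9\delta_i(\eps)^2+2\delta_i(\eps)$ whenever $t<\delta_i(\eps)^2$ (mere existence, e.g.\ from Benyamini's theorem, would not suffice, since Proposition~\ref{prop:product} demands a modulus common to all $i$). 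To apply the proposition I must verify $\lim_{t\to0+}\sup_i\omega_{\varphi_i}(t)=0$. The hypothesis $\delta(\eps):=\inf_i\delta_i(\eps)>0$ lets me meet the constraint $t<\delta_i(\eps)^2$ for all $i$ at once by imposing $t<\delta(\eps)^2$. The genuine obstacle is to bound the right-hand side uniformly in $i$, and for this I would invoke the classical Nordlander estimate $\delta_i(\eps)\le 1-\sqrt{1-\eps^2/4}$, valid for every Banach space, so that $\sup_i\delta_i(\eps)\to0$ as $\eps\to0+$. Then $\sup_i\omega_{\varphi_i}(t)\le \eps+9\bigl(1-\sqrt{1-\eps^2/4}\bigr)^2+2\bigl(1-\sqrt{1-\eps^2/4}\bigr)$ for $t<\delta(\eps)^2$, and letting $\eps\to0$ gives the required uniform vanishing. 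Proposition~\ref{prop:product} now furnishes the retraction on $X^*$, finishing the last case.
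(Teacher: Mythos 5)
Your proposal is correct and follows the route the paper intends: the $c_0$ case is the preceding theorem verbatim, the $1<p<\infty$ case is Day's theorem plus Benyamini's result for separable uniformly convex duals, and the $\ell_1$ case is Proposition~\ref{prop:product}. The paper only spells out the middle case, so your verification of the hypothesis $\lim_{t\to 0+}\sup_i\omega_{\varphi_i}(t)=0$ for the $\ell_1$ sum --- extracting the explicit modulus $\eps+9\delta_i(\eps)^2+2\delta_i(\eps)$ from the proof of the $c_0$-theorem and controlling $\sup_i\delta_i(\eps)$ via Nordlander's bound (or, even more simply, by rerunning that proof with $\delta_i(\eps)$ replaced by $\min(\delta_i(\eps),\eps)$, which also covers any degenerate one-dimensional summands) --- is exactly the detail the paper leaves implicit, and you are right that bare existence of the retractions $\varphi_i$ would not suffice there.
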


\begin{proposition}\label{prop:basic}
Let $L$ be a locally compact Hausdorff space, $K$ be the one-point compactification of $K$ and let $M(L)$ and $M(K)$be the Banach spaces of all scalar-valued Borel regular measures on $L$ and $K$ with the total variational norms, respectively. Suppose that $M(K)$ admits a uniformly simultaneously continuous retraction as  a dual of $C(K)$. Then $M(L)$ admits  a uniformly simultaneously continuous retraction as a dual of $C_0(L)$.
\end{proposition}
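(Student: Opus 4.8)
The plan is to realize $M(L)$ as a subspace of $M(K)$ and transport the given retraction across this identification, the delicate point being the mismatch between the two weak-$*$ topologies. Since $K$ is the one-point compactification $L\cup\{\infty\}$, every $\mu\in M(K)$ splits as $\mu=\mu|_L+\mu(\{\infty\})\delta_\infty$, and because $\{\infty\}$ is a single Borel point we have the isometric $\ell_1$-decomposition $M(K)=M(L)\oplus_1\K\delta_\infty$, that is, $\|\mu\|=\|\mu|_L\|+|\mu(\{\infty\})|$. Write $\iota:M(L)\to M(K)$ for the isometric inclusion (extend a measure by $0$ at $\infty$) and $P:M(K)\to M(L)$, $P\mu=\mu|_L$, for the restriction. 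I would first record that $P$ is the adjoint of the isometric embedding $j:C_0(L)\hookrightarrow C(K)$ (extend a function by $0$ at $\infty$); hence $P$ is a norm-one projection that is moreover weak-$*$-to-weak-$*$ continuous. By contrast $\iota$ is only isometric and is \emph{not} weak-$*$ continuous, which is the source of all the trouble below.

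Let $r:M(K)\to B_{M(K)}$ be the given uniformly simultaneously continuous retraction. I would define the candidate retraction on $M(L)$ by
\[
\tilde r:=P\circ r\circ\iota,\qquad \tilde r(\nu)=r(\iota\nu)|_L .
\]
Two of the three required properties are immediate. First, $\tilde r$ maps into $B_{M(L)}$ because $r$ maps into $B_{M(K)}$ and $P$ is norm-one. Second, $\tilde r$ is a retraction: if $\|\nu\|\le 1$ then $\iota\nu\in B_{M(K)}$, so $r(\iota\nu)=\iota\nu$ and $\tilde r(\nu)=(\iota\nu)|_L=\nu$. Third, uniform norm continuity is inherited from $r$: since $\iota$ is an isometry and $P$ is a contraction, $\omega_{\tilde r}(t)\le\omega_r(t)$, so $\tilde r$ has the same modulus of continuity as $r$; in particular the $f$-uniform version of the property is preserved with the same control $f$.

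The only real work is weak-$*$ continuity of $\tilde r$, and this is the main obstacle. Let $\nu_\alpha\to\nu$ in $\sigma(M(L),C_0(L))$. By the uniform boundedness principle the net is norm bounded, so by Banach--Alaoglu every subnet has a further subnet $(\nu_\beta)$ with $\iota\nu_\beta\to\omega$ in $\sigma(M(K),C(K))$ for some $\omega\in M(K)$. Testing against $jh$ for $h\in C_0(L)$ shows $\omega|_L=\nu$, so $\omega=\iota\nu+c\,\delta_\infty$, where $c=\lim_\beta\nu_\beta(L)-\nu(L)$ is exactly the mass that has escaped to $\infty$ along the subnet. Using weak-$*$ continuity of $r$ and then of $P$ gives $\tilde r(\nu_\beta)=P\,r(\iota\nu_\beta)\to P\,r(\omega)=r(\omega)|_L$ in $\sigma(M(L),C_0(L))$. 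Thus $\tilde r$ will be weak-$*$ continuous once I establish the key identity
\[
r(\iota\nu+c\,\delta_\infty)\big|_L=r(\iota\nu)\big|_L\qquad(\nu\in M(L),\ c\in\K),
\]
that is, that the restriction to $L$ of $r$ is insensitive to any amount of mass placed at the single point $\infty$. I expect this identity to be the crux of the whole proposition; note that it genuinely uses the hypothesis that $r$ is weak-$*$ continuous (a radial retraction, which is not weak-$*$ continuous, already violates it). The route I would take is to approximate $\delta_\infty$ in the weak-$*$ topology of $M(K)$ by point masses $\delta_{x}$ with $x\to\infty$ in $L$ (these lie in $\iota(M(L))$ and satisfy $\delta_x\to 0$ in $\sigma(M(L),C_0(L))$), to write $\omega$ as a weak-$*$ limit of the interior measures $\iota(\nu+c\,\delta_{x})$, and to push this through $r$ and $P$; the whole difficulty is to turn this approximation into the stated equality without circularity, for which I would exploit the \emph{uniform} continuity of $r$ to obtain quantitative control of $r$ near the face of $B_{M(K)}$ cut out by $\{\infty\}$. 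Once the identity holds we get $r(\omega)|_L=r(\iota\nu)|_L=\tilde r(\nu)$, every subnet limit agrees with $\tilde r(\nu)$, and the standard subnet criterion yields $\tilde r(\nu_\alpha)\to\tilde r(\nu)$, completing the proof.
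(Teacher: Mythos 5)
Your construction is exactly the one the paper uses: writing $\tilde\mu=\iota\mu$ for the extension by zero at $\infty$, the paper defines $\psi(\mu)$ by $\inner{f,\psi(\mu)}=\int_L f\,d\phi(\tilde\mu)$, which is precisely your $P\circ r\circ\iota$, and your verifications of the retraction property and of the estimate $\omega_{\tilde r}\le\omega_r$ agree with (indeed, are more detailed than) what the paper asserts. The problem is that you have not proved the one assertion that is not routine. You correctly isolate weak-$*$ continuity as the crux, reduce it via a subnet argument to the identity $r(\iota\nu+c\,\delta_\infty)|_L=r(\iota\nu)|_L$, and then explicitly stop: you describe a strategy (approximating $\delta_\infty$ by $\delta_x$ with $x\to\infty$) and yourself observe that pushing it through $r$ is circular, since $\psi(\nu+c\,\delta_x)\to\psi(\nu)$ is an instance of the very weak-$*$ continuity being proved. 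So the key identity is left as an unproven claim that is essentially equivalent to the conclusion, and the proposal therefore does not contain a proof of the proposition. Nothing in the hypotheses obviously forces the identity: weak-$*$ continuity of $r$ for $\sigma(M(K),C(K))$ constrains how $r$ treats mass escaping to $\infty$, but it does not formally prevent $r$ from redistributing mass sitting at $\infty$ back into $L$ in a way that depends on $c$ (for instance, $r$ could legitimately depend on the weak-$*$ continuous functional $\mu\mapsto\mu(K)$, which changes with $c$). Closing the gap would require either proving the identity from the hypotheses or exhibiting additional structure of the retractions actually being used.

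To be fair to you, the paper's own proof is no better on this point: it dismisses both the weak-$*$ continuity and the uniform norm continuity with ``it is easy to check,'' and your analysis in effect shows that the first of these is not easy and is where the entire content of the proposition lives. A secondary, smaller issue: you invoke the uniform boundedness principle to conclude that a weak-$*$ convergent \emph{net} is norm bounded, which is false for nets (only for sequences); you would need to either restrict attention to bounded nets (which suffices if weak-$*$ continuity is only required on bounded sets, as is implicitly the case elsewhere in the paper) or handle unbounded nets separately.
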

\begin{proof}
Let $K= L\cup \{\infty\}$ and let $\phi$ be a $f$-uniformly simultaneously continuous retraction from $C(K)^*$ onto $B_{C(K)^*}$. Then for each $\mu\in M(L)=C(L)^*$ and for each Borel subset $E$ of $K$, define
$\tilde{\mu}(E) = \mu(E\setminus \{\infty\})$. Then it is clear that $\tilde\mu \in M(K).$ Define the map $\psi : M(L) \to B_{M(L)}$ by, for each $f\in C_0(L)$,
\[ \inner{ f, \psi(\mu)}  = \int_L f d \phi(\tilde \mu).\]
Then it is easy to check that $\psi$ is weak-$*$ continuous on $M(L)=C_0(L)^*$ and it is $f$-uniformly continuous with respect to the norm.
\end{proof}

\begin{corollary}
Let $L$ be a locally compact metrizable Hausdorff space. Then the real space $C_0(L)^*$ admits a uniformly simultaneously continuous retraction.
\end{corollary}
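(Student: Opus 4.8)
The plan is to combine Proposition~\ref{prop:basic} with Benyamini's theorem (recalled in the introduction, \cite{Ben1}) that $C(K)^*$ admits a uniformly simultaneously continuous retraction whenever $K$ is a \emph{compact metric} space. Since Proposition~\ref{prop:basic} says precisely that a retraction on $M(K)=C(K)^*$, for $K$ the one-point compactification of $L$, transfers to a retraction on $M(L)=C_0(L)^*$, the whole question reduces to producing such a retraction on $C(\alpha L)^*$, where $\alpha L = L\cup\{\infty\}$ is the one-point compactification of $L$.

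First I would treat the case where $L$ is separable. Then $L$ is second countable, and a locally compact second countable Hausdorff space has a second countable one-point compactification; being compact Hausdorff and second countable, $\alpha L$ is metrizable by Urysohn's metrization theorem. Hence Benyamini's result applies to $K=\alpha L$, giving a uniformly simultaneously continuous retraction on $C(\alpha L)^*$, and Proposition~\ref{prop:basic} then delivers the desired retraction on $C_0(L)^*$. This is the short, clean core of the argument.

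For a general metrizable $L$ I would pass to a decomposition. Metrizable spaces are paracompact, and a locally compact paracompact Hausdorff space is the topological disjoint sum $L=\bigsqcup_{\gamma} L_\gamma$ of clopen $\sigma$-compact subspaces. Each $L_\gamma$ is $\sigma$-compact, locally compact and metrizable, hence Lindel\"of and therefore second countable, so the separable case applies to each $L_\gamma$ and yields retractions $\psi_\gamma$ on $C_0(L_\gamma)^*=M(L_\gamma)$. Because $C_0(L)=\left[\bigoplus_\gamma C_0(L_\gamma)\right]_{c_0}$, one has the identification $C_0(L)^*=\left[\bigoplus_\gamma M(L_\gamma)\right]_{\ell_1}$, and the remaining task is to glue the $\psi_\gamma$ into a single retraction onto the $\ell_1$-ball of this sum.

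This gluing is where I expect the real difficulty to lie, and it is genuinely different from the situation of Proposition~\ref{prop:product}: there the dual is an $\ell_\infty$-sum whose unit ball is a product of the coordinate balls, so the retraction can be assembled coordinatewise, whereas here the unit ball of the $\ell_1$-sum is not such a product, so neither a coordinatewise rule nor a radial rescaling is available; indeed the norm is only weak-$*$ lower semicontinuous, which is exactly the obstruction the earlier constructions in the paper are built to overcome. To force the gluing through one would need the moduli $\omega_{\psi_\gamma}$ to be controlled uniformly in $\gamma$ (so Benyamini's compact-metric construction must be inspected for a modulus depending only on the geometry, not on the particular $\alpha L_\gamma$), and then exploit the mutual disjointness of the supports of measures living in distinct $M(L_\gamma)$ to recover weak-$*$ continuity across a possibly uncountable index set.
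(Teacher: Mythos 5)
Your separable case is precisely the paper's proof: the paper invokes Benyamini's result that $C(K)^*$ admits a uniformly simultaneously continuous retraction for $K$ compact metrizable, asserts that the one-point compactification $\hat L$ of $L$ is compact metrizable, and concludes via Proposition~\ref{prop:basic}. What you have noticed, correctly, is that the step ``$L$ metrizable $\Rightarrow$ $\hat L$ metrizable'' requires $L$ to be second countable (equivalently separable, equivalently $\sigma$-compact, for locally compact metrizable spaces): a compact metrizable space is second countable and so is every subspace, so a non-separable $L$ --- an uncountable discrete space, say --- has a non-metrizable one-point compactification. The paper states the implication without this hypothesis, so your analysis exposes an imprecision in the published argument rather than a defect in yours; under the reading that the corollary concerns separable (second countable) locally compact metric spaces, which is the setting of Benyamini's theorem being quoted, your first two paragraphs constitute a complete proof and it is the same proof as the paper's.

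For general metrizable $L$ your decomposition $L=\bigsqcup_\gamma L_\gamma$ into clopen $\sigma$-compact pieces is the natural move, but, as you yourself concede, the argument is not finished: $C_0(L)^*$ identifies with the $\ell_1$-sum of the spaces $M(L_\gamma)$, and no result in the paper glues retractions across an $\ell_1$-sum of \emph{dual} spaces. Proposition~\ref{prop:product} goes the other way (an $\ell_1$-sum of preduals, so that the dual is an $\ell_\infty$-sum whose ball is the product of the coordinate balls and a coordinatewise definition works), and the $c_0$-sum theorem requires uniformly convex duals, which the spaces $M(L_\gamma)$ never are. So the non-separable case remains genuinely open in your write-up; an honest statement of what you have proved is the corollary with ``metrizable'' strengthened to ``second countable'' (equivalently, separable metrizable), which already covers everything the paper's own proof establishes.
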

\begin{proof}
It is shown that if $K$ is compact metrizable space, then the real space $C(K)^*$ admits a uniformly simultaneously continuous retraction. Since $L$ is metrizable, its one-point compactfication $\hat{L}$ is compact metrizable. Hence the result follows from Proposition~\ref{prop:basic}.
\end{proof}

\section{Retraction and Bishop-Phelps-Bollob\'as property}

The Bishop-Phelps theorem  \cite{BP} states that for a Banach space $X$, every element in its dual space $X^*$ can be approximated by ones that attain their norms. Since then, there has been an extensive research to extend this result to bounded linear operators between Banach spaces  \cite{Bou,JoWo,Lindens, Partington, Schacher, Schacher-Pacific} and non-linear mappings \cite{AAP, AGM, AFW, C, CK1, KL}. On the other hand, Bollob\'as \cite{Bollobas} sharpened the Bishop-Phelps theorem which is called the Bishop-Phelps-Bollob\'as theorem.

\begin{theorem}[Bishop-Phelps-Bollob\'{a}s theorem]\label{thm:BPBTheorem}
Let $X$ be a Banach space. If $x\in S_X$ and $x^*\in S_{X^*}$ satisfy $|x^*(x)-1|< \eps^2/4$, then there exist $y\in
S_X$ and $y^*\in S_{X^*}$ such that $y^*(y)=1$, $\|x^*-y^*\|<\eps$ and $\|x-y\|<\eps$.
\end{theorem}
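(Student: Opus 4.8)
The plan is to reduce to the real scalar case and then run a variational argument that simultaneously locates the nearby point and the nearby norming functional. First I would dispose of the complex scalars: after multiplying $x^*$ by a suitable unimodular constant I may assume $x^*(x)$ is real, and then I pass to the real-linear functional $g=\re x^*$, which satisfies $\norm{g}=\norm{x^*}=1$ and $g(x)=\re x^*(x)>1-\eps^2/4$. Since for complex functionals one has $\norm{u^*}=\norm{\re u^*}$ and $\norm{u^*-v^*}=\norm{\re u^*-\re v^*}$, any real norming functional $h$ that I produce lifts to the unique complex $y^*$ with $\re y^*=h$ without changing norms or distances; moreover $h(y)=1=\norm{y}=\norm{y^*}$ forces $y^*(y)=1$, and undoing the rotation returns a functional for the original $x^*$. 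So it suffices to find $y\in S_X$ and $h\in S_{X^*}$ with $h(y)=1$, $\norm{x-y}<\eps$ and $\norm{g-h}<\eps$.

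Second, I would locate the point $y$ by Ekeland's variational principle applied to $f=-g$ on the complete metric space $B_X$. Here $\inf_{B_X}f=-\norm{g}=-1$ and $f(x)<-1+\eps^2/4$, so $x$ is an $\eps^2/4$-approximate minimizer. Choosing the Ekeland radius $\lambda$ slightly below $\eps$ produces $y\in B_X$ with $\norm{x-y}\le\lambda<\eps$, $g(y)\ge g(x)$, and the variational inequality
\[ g(z)\le g(y)+\tfrac{\eps^2}{4\lambda}\,\norm{z-y}\qquad(z\in B_X).\]
Equivalently, $y$ maximizes over $B_X$ the concave function $z\mapsto g(z)-\beta\norm{z-y}$, where $\beta=\eps^2/(4\lambda)\approx\eps/4$.

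Third, I would extract the norming functional from this optimality condition. The inequality says $0\in\partial\big(-g+\beta\norm{\cdot-y}+\iota_{B_X}\big)(y)$, so the Moreau--Rockafellar sum rule (the qualification holds since the first two summands are continuous) yields a decomposition $g=b^*+n^*$ with $b^*\in\beta B_{X^*}$ and $n^*$ in the normal cone of $B_X$ at $y$. If $\norm{y}<1$ this normal cone is trivial, forcing $g=b^*$ and $\norm{g}\le\beta<1$, which is impossible; hence $\norm{y}=1$ and $n^*=\mu h$ for some $\mu>0$ and $h\in S_{X^*}$ with $h(y)=1$. Evaluating $g=b^*+\mu h$ at $y$ gives $\mu=g(y)-b^*(y)$, which is close to $1$ since $g(y)>1-\eps^2/4$ and $\abs{b^*(y)}\le\beta$; then $\norm{g-h}\le\norm{b^*}+\abs{\mu-1}\le\eps^2/4+2\beta$, which for $\lambda$ near $\eps$ and $0<\eps<2$ is strictly below $\eps$. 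This $h$, together with its complex lift, is the desired pair.

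The main obstacle is precisely this last estimate: producing a genuine \emph{unit} norming functional $h$ whose distance to $g$ is strictly less than $\eps$ rather than merely comparable to it. This is where the sharp hypothesis $\eps^2/4$ is consumed, and it forces a careful calibration of the Ekeland radius (taking $\lambda$ close to $\eps$, not the naive $\eps/2$) so that $\norm{x-y}<\eps$ and $\norm{g-h}<\eps$ hold at once; the degenerate range $\eps\ge 2$ is trivial and handled separately. An alternative avoiding Ekeland is the classical Bishop--Phelps cone argument: order $X$ by the cone $\{z:\norm{z}\le(2/\eps)\,g(z)\}$ and extract, via Zorn's lemma, a maximal element $y$ of $\{z\in B_X:z\succeq x\}$; there the analogous difficulty is checking that chains have upper bounds (using completeness and the boundedness of $g$ along a chain) and reading the aperture bound $\norm{g-h}\le\eps$ off of maximality.
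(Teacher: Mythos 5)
The paper does not prove this statement at all: it is quoted verbatim as Bollob\'as's classical sharpening of the Bishop--Phelps theorem, with a citation to \emph{Bull.\ London Math.\ Soc.} \textbf{2} (1970), and is then used as a black box in Section~3. So there is no in-paper argument to compare against; what you have written is a self-contained modern proof, and it is essentially correct. Your route (reduce to real scalars via $g=\re x^*$; apply Ekeland's variational principle to $-g$ on the complete metric space $B_X$ with defect $\eps^2/4$ and radius $\lambda$ just below $\eps$; read off the norming functional from $0\in\partial\bigl(-g+\beta\norm{\cdot-y}+\iota_{B_X}\bigr)(y)$ via the Moreau--Rockafellar sum rule and the normal cone $N_{B_X}(y)=\{x^*: x^*(y)=\norm{x^*}\}$) is the standard post-Ekeland proof, as opposed to Bollob\'as's original argument, which invokes the Bishop--Phelps cone construction directly (your final remark correctly identifies that alternative). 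The arithmetic checks out: $\mu=g(y)-b^*(y)$ gives $\abs{\mu-1}\le \eps^2/4+\beta$, hence $\norm{g-h}\le \eps^2/4+2\beta\to \eps^2/4+\eps/2<\eps$ as $\lambda\uparrow\eps$, for $0<\eps<2$. Two small points: the preliminary rotation is unnecessary, since $\abs{x^*(x)-1}<\eps^2/4$ already yields $\re x^*(x)>1-\eps^2/4$; and your dismissal of the range $\eps\ge 2$ as trivial is slightly too quick at the endpoint $\eps=2$, where the generic norm-attaining pair only gives distances $\le 2$ rather than $<2$ --- harmless here, since the paper only ever invokes the theorem for $\eps\in(0,1)$, but worth a sentence if you want the statement in full generality.
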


Acosta, Aron, Garc\'ia and Maestre \cite{AAGM2} introduced the Bishop-Phelps-Bollob\'as property to study
extensions of the theorem above to operators between Banach spaces.

\begin{definition}[\textrm{\cite[Definition~1.1]{AAGM2}}]
A pair of Banach spaces $(X,Y)$ is said to have the
\emph{Bishop-Phelps-Bollob\'{a}s property} (\emph{BPBp} in short) for operators 
if, for every $\eps\in (0,1)$, there is $\eta(\eps)>0$ such that for every
$T_0\in L(X,Y)$ with $\|T_0\|=1$ and every $x_0\in S_X$ satisfying
$$
\|T_0(x_0)\|>1-\eta(\eps),
$$
there exist $S\in L(X,Y)$ and $x\in S_X$ such that
$$
1=\|S\|=\|Sx\|,\qquad \|x_0-x\|<\eps \qquad \text{and} \qquad \|T_0-T\|<\eps.
$$
In this case, we will say that $(X,Y)$ has the BPBp with function $\eps\longmapsto \eta(\eps)$. The pair $(X, Y)$ is said to have the \emph{Bishop-Phelps Property} (BPp) if the set of all norm-attaining operators is dense in $L(X, Y)$.
\end{definition} 

It is clear that BPBp implies BPp. Recall that Bourgain \cite{Bou} showed that $(X, Y)$ has the BPp for every Banach space $Y$ if $X$ has the Radon-Nikod\'ym property.
However, it is shown \cite{AAGM2}  there exists a Banach space $Y$ such that $(\ell_1, Y)$ does not have BPBp even though $\ell_1$ has the Radon-Nikod\'ym property.

In the study of the operators from a Banach space into $C(K)$, the following representation theorem is useful.  We are  stating a version of this representation theorem for operators into $C_0(S)$ space, which is a slight modification of \cite[Theorem 1, p.~490]{DuSch} and we omit the proof.

\begin{lemma}
\label{lem-isometric-isomorphism}
Let $X$ be a Banach space and let $L$ be a locally compact Hausdorff topological space. Given an operator
$T:X\longrightarrow{C_0(S)}$, define $\mu:S\longrightarrow{X^*}$ by $\mu(s)=T^*(\delta_s)$ for
every $s\in S$. Then the relationship
$$
[Tx](s)= \mu(s)(x), \ \ \  \forall x\in X, s \in S
$$
defines an isometric isomorphism between $\mathcal{L}(X,C_0(L))$ and the space of $w^*$-continuous
functions from $S$ to $X^*$ which vanishes at infinity, endowed  with the supremum norm, i.e.
$\Vert{\mu}\Vert=\sup\{\Vert{\mu(s)}\Vert:s\in{S}\}$. The subspace of  compact operators
corresponds to  norm continuous functions which vanishes at infinity.
\end{lemma}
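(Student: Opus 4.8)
The plan is to set up the correspondence $T\leftrightarrow\mu$ explicitly in both directions, verify that it is well defined, isometric and bijective, and finally identify the compact operators; throughout I treat $L$ and $S$ as the same locally compact Hausdorff space. Note first that the defining formula is automatic: for $x\in X$ and $s\in S$, since the evaluation $\delta_s$ is the norm-one functional ``value at $s$'' on $C_0(S)$,
\[ [Tx](s)=\inner{\delta_s,Tx}=\inner{T^*\delta_s,x}=\mu(s)(x). \]

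First I would check that $T\mapsto\mu$, with $\mu(s)=T^*\delta_s$, lands in the stated space. Boundedness is immediate from $\norm{\mu(s)}=\norm{T^*\delta_s}\le\norm{T}$. Weak-$*$ continuity of $\mu$ is, by definition, the assertion that $s\mapsto\mu(s)(x)=[Tx](s)$ is continuous for each fixed $x$, which holds because $Tx\in C_0(S)$; the same identity shows $s\mapsto\mu(s)(x)$ lies in $C_0(S)$ for every $x$, which is precisely the sense in which $\mu$ ``vanishes at infinity'' (in the weak-$*$ sense; norm-vanishing would already fail for the identity on $c_0$, so this reading is forced if the correspondence is to be onto all of $\mathcal L(X,C_0(S))$).

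Conversely, given a weak-$*$ continuous $\mu:S\to X^*$ with $\inner{\mu(\cdot),x}\in C_0(S)$ for every $x$, I would set $[Tx](s)=\mu(s)(x)$; then $Tx$ is continuous and vanishes at infinity, so $Tx\in C_0(S)$, and $\sup_s\norm{\mu(s)}<\infty$ is automatic from the uniform boundedness principle applied to $\{\mu(s)\}_{s\in S}$. The two assignments are visibly mutually inverse, and the isometry follows from interchanging suprema,
\[ \norm{T}=\sup_{\norm{x}\le1}\sup_{s\in S}\abs{\mu(s)(x)}=\sup_{s\in S}\sup_{\norm{x}\le1}\abs{\mu(s)(x)}=\sup_{s\in S}\norm{\mu(s)}. \]

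For the compact-operator statement I would run the Arzel\`a--Ascoli criterion for $C_0(S)$, whose hypotheses are boundedness, equicontinuity, and uniform vanishing at infinity of the family $T(B_X)$. If $\mu$ is norm-continuous and $\norm{\mu(\cdot)}$ vanishes at infinity, then from $\abs{[Tx](s)-[Tx](s')}\le\norm{\mu(s)-\mu(s')}$ (uniformly over $x\in B_X$) together with $\sup_{\norm{x}\le1}\abs{[Tx](s)}=\norm{\mu(s)}$ all three hypotheses hold, so $T(B_X)$ is relatively compact and $T$ is compact; conversely, compactness of $T$ makes $T(B_X)$ relatively compact, and reading the Arzel\`a--Ascoli equicontinuity and equi-vanishing back through the same two identities yields norm continuity of $\mu$ and norm-vanishing of $\norm{\mu(\cdot)}$ at infinity. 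I expect this last equivalence to be the main obstacle: one must state Arzel\`a--Ascoli in the correct form for the non-compact space $S$ (equicontinuity together with uniform decay outside compact sets) and check that relative compactness of $T(B_X)$ transfers to continuity estimates that are uniform over the whole unit ball. The isometry, the weak-$*$ continuity, and the bijectivity are otherwise routine.
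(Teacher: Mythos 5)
Your proof is correct and is the standard argument; the paper itself gives no proof of this lemma (it cites \cite[Theorem 1, p.~490]{DuSch} and explicitly omits the argument), so there is nothing to compare against beyond the classical Dunford--Schwartz representation, which your two-directional construction, the sup-interchange isometry, and the Arzel\`a--Ascoli characterization of compactness reproduce faithfully. Your reading of ``vanishes at infinity'' in the weak-$*$ sense for general operators (forced by the $c_0$ identity example) versus in norm for compact operators is the right repair of the imprecise statement, and the only point needing care --- the $C_0(S)$ form of Arzel\`a--Ascoli --- is handled correctly by viewing uniform decay outside compact sets as equicontinuity at the point at infinity of the one-point compactification.
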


If $C(K)$ is the space of all continuous functions on a compact Hausdorff space $K$ and $X$ is a Banach space whose dual $X^*$ admits a uniform simultaneously continuous retraction, then the norm-attaining operators are dense in the space $\mathcal{L}(X, C(K))$ of bounded linear operators from $X$ into $C(K)$  \cite[Proposition~4.22.]{BenLin}. So $L_\infty[0,1]$ does not admit a uniformly simultaneously continuous retraction because the pair $(L_1[0,1], C(S))$ does not have the BPp for a certain compact metric space $S$ \cite{Schacher-Pacific, JohWol}.  It is worth-while to note that  $(L_1(\mu), L_\infty(\nu))$ has BPp if $\mu$ is any measure and $\nu$ is a localizable measure \cite{FP, PS}. These results are refined to show that  $(L_1(\mu), L_\infty(\nu))$ has BPBp if $\mu$ is any measure and $\nu$ is a localizable measure \cite{ACGM, CKLM}.

Let $f$ be a nonnegative nondecreasing function such that $\lim_{t\to 0+}f(t)=0=f(0)$. A map $\varphi: X^* \to B_{X^*}$ is called an {\it $f$-approximate nearest point map} if $\|\varphi(x^*) - x^*\| \le d(x^*, B_{X^*}) + f(d(x^*, B_{X^*}))$ for all $x^*\in X^*$. This notion is introduced by Benyamini \cite{Ben1}. A dual space $X^*$ is said to {\it admit weak-$*$ approximate nearest point map} if there exists a weak-$*$ continuous $f$-approximate nearest point map $\varphi:X^* \to B_{X^*}$. Notice that the weak-$*$ continuous approximate nearest point map is a weak-$*$ continuous retraction. It is easy to check that if $X^*$ admits a uniformly simultaneously continuous retraction $\varphi: X^*\to B_{X^*}$, then $\varphi$ is a weak-$*$ $\omega_\varphi$-approximate nearest point map  \cite{Ben1}.

\begin{theorem}
Let $K$ be a locally compact Hausdorff space and let $X$ be a Banach space. If $X^*$ admits a weak-$*$ approximate nearest map, then the pair $(X, C_0(K))$ has the BPBp.
\end{theorem}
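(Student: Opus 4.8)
The plan is to combine the representation of operators into $C_0(K)$ from Lemma~\ref{lem-isometric-isomorphism} with the scalar Bishop-Phelps-Bollob\'as theorem for the pair $(X,X^*)$ (Theorem~\ref{thm:BPBTheorem}), and then to ``clip'' a suitable perturbation back into $B_{X^*}$ by composing with the given weak-$*$ continuous $f$-approximate nearest point map $\varphi:X^*\to B_{X^*}$. Recall that such a $\varphi$ is a retraction, so $\varphi(x^*)=x^*$ whenever $\|x^*\|\le 1$, while in general $\|\varphi(x^*)-x^*\|\le d(x^*,B_{X^*})+f(d(x^*,B_{X^*}))$ with $f$ nonnegative, nondecreasing and $f(0^+)=0$.

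First I would fix $\eps\in(0,1)$ and locate a point of near-attainment. Given $T_0\in L(X,C_0(K))$ with $\|T_0\|=1$ and $x_0\in S_X$ with $\|T_0 x_0\|>1-\eta$, let $\mu_0:K\to X^*$ be the weak-$*$ continuous function vanishing at infinity associated with $T_0$, so $[T_0 x](s)=\mu_0(s)(x)$ and $\sup_s\|\mu_0(s)\|=1$. Since $s\mapsto\mu_0(s)(x_0)$ lies in $C_0(K)$ with sup-norm $>1-\eta$, there is $s_0\in K$ with $|\mu_0(s_0)(x_0)|>1-\eta$, hence $1-\eta<\|\mu_0(s_0)\|\le1$. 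Choosing a unimodular scalar $\theta$ with $\theta\,\mu_0(s_0)(x_0)=|\mu_0(s_0)(x_0)|$ and setting $z_0^*=\theta\mu_0(s_0)/\|\mu_0(s_0)\|\in S_{X^*}$ gives $|z_0^*(x_0)-1|<\eta$. Applying Theorem~\ref{thm:BPBTheorem} with a parameter $\tilde\eps$ satisfying $\eta\le\tilde\eps^2/4$, I obtain $x\in S_X$ and $y^*\in S_{X^*}$ with $y^*(x)=1$, $\|x-x_0\|<\tilde\eps$, and $\|y^*-z_0^*\|<\tilde\eps$.

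Next I would build the new operator. Since $K$ is locally compact Hausdorff, Urysohn's lemma yields a continuous, compactly supported $h:K\to[0,1]$ with $h(s_0)=1$; define
\[ \psi(s)=\mu_0(s)+h(s)\bigl(\bar\theta y^*-\mu_0(s_0)\bigr),\qquad \nu(s)=\varphi(\psi(s)). \]
Here $\psi$ is weak-$*$ continuous (a weak-$*$ continuous term plus a norm-continuous one), so $\nu=\varphi\circ\psi$ is weak-$*$ continuous. Using $\theta\mu_0(s_0)=\|\mu_0(s_0)\|z_0^*$ one gets $\|\bar\theta y^*-\mu_0(s_0)\|=\|y^*-\theta\mu_0(s_0)\|\le\|y^*-z_0^*\|+(1-\|\mu_0(s_0)\|)<\tilde\eps+\eta$, whence $\|\psi(s)-\mu_0(s)\|<\tilde\eps+\eta$ for all $s$. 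Off the compact support of $h$ we have $\psi(s)=\mu_0(s)\in B_{X^*}$, so $\nu(s)=\mu_0(s)$ there; thus $\nu$ agrees with $\mu_0$ outside a compact set and vanishes at infinity, while $\|\nu(s)\|\le1$ everywhere. At $s_0$, $\psi(s_0)=\bar\theta y^*\in B_{X^*}$ gives $\nu(s_0)=\bar\theta y^*$ and $|\nu(s_0)(x)|=1$. Letting $S$ be the operator associated with $\nu$, we get $\|S\|=\sup_s\|\nu(s)\|=1$ (attained at $s_0$) and $\|Sx\|=\sup_s|\nu(s)(x)|=1$.

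It remains to estimate $\|S-T_0\|=\sup_s\|\nu(s)-\mu_0(s)\|$, and this is where the approximate-nearest-point hypothesis is essential. Since $\mu_0(s)\in B_{X^*}$ we have $d(\psi(s),B_{X^*})\le\|\psi(s)-\mu_0(s)\|<\tilde\eps+\eta$, so that
\[ \|\nu(s)-\mu_0(s)\|\le\|\varphi(\psi(s))-\psi(s)\|+\|\psi(s)-\mu_0(s)\|<2(\tilde\eps+\eta)+f(\tilde\eps+\eta). \]
Because $f(t)\to0$ as $t\to0^+$, one first fixes $\tilde\eps\le\eps$ and then $\eta\le\tilde\eps^2/4$ small enough that the right-hand side is $<\eps$; this specifies the admissible function $\eta(\eps)$ and yields $\|x-x_0\|<\eps$, $\|S-T_0\|<\eps$, and $\|S\|=\|Sx\|=1$, establishing the BPBp for $(X,C_0(K))$. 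The only genuinely delicate point is the interplay of topologies: $\mu_0$ is merely weak-$*$ continuous, so the norm-perturbation forcing attainment at $s_0$ need not remain in $B_{X^*}$, and the clipping must preserve weak-$*$ continuity; both are handled precisely because $\varphi$ is simultaneously weak-$*$ continuous (keeping $\nu$ a legitimate operator symbol) and an $f$-approximate nearest point map (making the clipping error uniformly small). In the real scalar case $\theta$ is merely a sign and the argument is unchanged.
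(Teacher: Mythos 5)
Your proposal is correct and follows essentially the same route as the paper: localize near-attainment at a point $s_0$, apply the scalar Bishop--Phelps--Bollob\'as theorem to $\mu_0(s_0)/\|\mu_0(s_0)\|$, perturb the symbol by a bump function times $y^*-\mu_0(s_0)$, and compose with the weak-$*$ continuous $f$-approximate nearest point map to land back in $B_{X^*}$, with the same $d+f(d)$ error estimate. Your treatment is in fact slightly more careful than the paper's on two small points (the unimodular rotation $\theta$ needed before invoking the scalar theorem, and the verification that the new symbol vanishes at infinity), but the argument is the same.
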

\begin{proof}
Let $r: X^* \to B_{X^*}$ be a weak-$*$ $f$-approximate nearest point map. Given $\eps>0$, suppose that $\|T(x_0)\|>1-\eps^2/4$ for some $T\in S_{L(E, C(K))}$ and $x_0\in S_E$. Let $\varphi :K\to E^*$ be the function $\varphi(s) = T^*(\delta_s)$ for all $s\in K$. Choose $t_0\in K$ such that $|T(x_0)(t_0)| = |\inner{ x_0, T^*(\delta_{t_0})}| = |\varphi(t_0)(x_0)|>1-\eps^2/4$. By the Bishop-Phelps-Bollob\'as theorem \ref{thm:BPBTheorem}, there exists a norm-attaining functional $x_1^*\in S_{E^*}$ and $x_1\in S_X$ such that
\[ \|x_0 - x_1 \|<\eps, \ \ \ \ \|x_1^* - \frac{\varphi(t_0)}{\|\varphi(t_0)\|}\|<\eps.\]
Since $\|\varphi(t_0) - \frac{\varphi(t_0)}{\|\varphi(t_0)\|} \|= 1-\|\varphi(t_0)\|<\eps^2/4<\eps$, we have $\|x_1^* - \varphi(t_0)\|<2\eps$. Choose a function $f_0 \in C_0(K)$ such that $f_0(t_0)=1$ and $0\le f\le 1$.
Define $\psi: K\to E^*$ by
\[ \psi(t) = r(\varphi(t) + f_0(t)(x_1^* -\varphi(t_0))) \ \ \ \ (t\in K).\]
Then $\psi(t_0) = r(x_1^*)=x_1^*.$ Let $S$ be the corresponding operator and
\[ 1\ge \|S\|\ge\|Sx_1\| \ge |\inner{ Sx_1, \delta_{t_0}}|= |\inner{\psi(t_0), x_1}|=| \inner{x_1^*, x_1}|=1.\]
Then we have
\begin{align*}
 \|S- T\|& =\sup_{t\in K} \|\varphi(t) - \psi(t) \| = \sup_{t\in K} \|\varphi(t)-  r(\varphi(t) + f_0(t)(x_1^* - \varphi(t_0))) \|\\
& \le  \sup_{t\in K} \|(\varphi(t)+f_0(t)(x_1^*-\varphi(t_0)))-  r(\varphi(t) + f_0(t)(x_1^* - \varphi(t_0))) \| + \|x_1^*-\varphi(t_0)\| \\   
&\le d(\varphi(t) + f_0(t)( x_1^*-\varphi(t_0)), B_{X^*}) +f( d(\varphi(t) + f_0(t)( x_1^*-\varphi(t_0)), B_{X^*})) +  2\eps\\
&\le \|x_1^*-\varphi(t_0)\| + f(\|x_1^*-\varphi(t_0)\|) +2\eps\\
&\le 4\eps + f(2\eps).
\end{align*} This completes the proof.
\end{proof}

Cascales, Guirao and Kadets \cite{CasGuiKad} (cf. \cite{ACK}) showed that every Asplund operator $T$ from a Banach space $X$ into a uniform algebra  $A$ can be approximated by norm-attaining Asplund operators. In particular, $(X, C(K))$ has the BPBp if $X$ is an Asplund space. Since $C[0,1]$ is not an Asplund space, the Banach space whose dual admits the uniformly simultaneously continuous retraction need not be an Asplund space. Benyamini also constructed an example which shows that there is a  (Asplund) Banach space which is isomorphic to $\ell_2$ whose dual does not admit a uniformly simultaneously continuous retraction \cite{Ben1}.

\begin{proposition}
Let $\{X_j\}_{j\in J}$ be a family of Banach spaces and let $X= \left[\bigoplus X_n\right]_1$. Suppose that each $X^*_j$ admits a weak-$*$ $f$-approximate nearest point map $\varphi_j$ with a common function $f$. Then $X$ admits a weak-$*$ $f$-approximate nearest point map.
\end{proposition}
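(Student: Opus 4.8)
The plan is to apply the given maps coordinate-wise and then check the two defining properties of a weak-$*$ $f$-approximate nearest point map separately. I first record the isometric identification $X^* = \left[\bigoplus_{j} X_j^*\right]_{\ell_\infty}$, so that every $x^* \in X^*$ is a bounded family $(x_j^*)_{j\in J}$ with $\|x^*\| = \sup_j \|x_j^*\|$ and $B_{X^*} = \{(y_j^*)_j : \|y_j^*\| \leq 1 \ \text{for all } j\}$. I then set $\varphi(x^*) = (\varphi_j(x_j^*))_{j\in J}$. Since each $\varphi_j(x_j^*)$ lies in $B_{X_j^*}$, we have $\sup_j \|\varphi_j(x_j^*)\| \leq 1$, so $\varphi$ indeed maps $X^*$ into $B_{X^*}$.

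For the approximate-nearest-point estimate I would first compute, writing $d_j = d(x_j^*, B_{X_j^*})$ and $D = \sup_j d_j$, that the $\ell_\infty$-structure forces $d(x^*, B_{X^*}) = D$. The lower bound $d(x^*, B_{X^*}) \geq D$ is immediate because any $y^* \in B_{X^*}$ satisfies $\|x_j^* - y_j^*\| \geq d_j$ for each $j$; the upper bound follows by choosing, for given $\eps > 0$, each coordinate $y_j^* \in B_{X_j^*}$ with $\|x_j^* - y_j^*\| \leq d_j + \eps$. Using that each $\varphi_j$ is an $f$-approximate nearest point map, I then obtain
\[ \|\varphi(x^*) - x^*\| = \sup_j \|\varphi_j(x_j^*) - x_j^*\| \leq \sup_j \big( d_j + f(d_j) \big). \]
Because $f$ is nondecreasing, the map $t \mapsto t + f(t)$ is nondecreasing, so $d_j \leq D$ gives $d_j + f(d_j) \leq D + f(D)$; taking the supremum yields $\|\varphi(x^*) - x^*\| \leq D + f(D) = d(x^*, B_{X^*}) + f(d(x^*, B_{X^*}))$. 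This monotonicity is exactly what lets a common $f$ suffice here, whereas Proposition~\ref{prop:product} needed a uniform modulus to keep the supremum under control.

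It remains to verify weak-$*$ continuity. If $x_\alpha^* \to x^*$ weak-$*$ in $X^*$, then testing against the canonical copy in $X$ of each vector $x_j \in X_j$ shows that $(x_\alpha^*)_j \to x_j^*$ weak-$*$ in $X_j^*$ for every fixed $j$; hence the weak-$*$ continuity of $\varphi_j$ gives $\varphi_j((x_\alpha^*)_j) \to \varphi_j(x_j^*)$ weak-$*$ coordinate-wise. To upgrade this to $\langle \varphi(x_\alpha^*), x\rangle \to \langle \varphi(x^*), x\rangle$ for every $x = (x_j)_j \in X$, I write the pairing as $\sum_j \langle \varphi_j((x_\alpha^*)_j), x_j\rangle$, split off a finite set $F \subset J$ with $\sum_{j\notin F} \|x_j\| < \eps$, bound the tail uniformly by means of $\|\varphi_j(\cdot)\| \leq 1$, and let the finite head over $F$ converge term by term.

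The genuinely analytic step, and the main obstacle, is this last passage from coordinate-wise weak-$*$ convergence to weak-$*$ convergence in the $\ell_\infty$-sum. It is here that the uniform bound $\|\varphi_j(\cdot)\| \leq 1$ must be combined with the summability $\sum_j \|x_j\| < \infty$ coming from the $\ell_1$-sum; the remaining verifications are routine coordinate-wise computations.
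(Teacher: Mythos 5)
Your proposal is correct and takes the same route the paper intends: the coordinate-wise definition $\varphi(x^*)=(\varphi_j(x_j^*))_{j\in J}$ under the identification $X^*=\left[\bigoplus_j X_j^*\right]_{\ell_\infty}$, which is exactly the construction used for the analogous Proposition~\ref{prop:product} (this particular proposition is stated in the paper without proof). Your verification of $d(x^*,B_{X^*})=\sup_j d(x_j^*,B_{X_j^*})$, the use of monotonicity of $t\mapsto t+f(t)$ to control the supremum, and the passage from coordinate-wise weak-$*$ convergence to weak-$*$ convergence via the summability $\sum_j\|x_j\|<\infty$ together with the uniform bound $\|\varphi_j(\cdot)\|\leq 1$ correctly supply the details the paper omits.
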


Proposition~\ref{prop:product} shows the following.
\begin{corollary}
Let $\{X_j\}_{j\in J}$ be a family of Banach spaces and let $X= \left[\bigoplus X_j\right]_1$. Suppose that each $X^*_j$ admits a uniformly simultaneously continuous retraction $\varphi_j$. If
\[\lim_{\eps\to 0+}\sup_{j\in J}\omega_{\varphi_j}(\eps)=0,\] then $(X, C_0(K))$ has the BPBp for all locally compact Hausdorff spaces $L$.
\end{corollary}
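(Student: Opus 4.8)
The plan is to obtain this statement as a direct composition of three facts that are already available in the excerpt, so the argument is purely a matter of checking that the hypotheses line up and then chaining the implications. First I would feed the hypotheses straight into Proposition~\ref{prop:product}. The assumption that each $X_j^*$ admits a uniformly simultaneously continuous retraction $\varphi_j$, together with the uniform control $\lim_{\eps\to 0+}\sup_{j\in J}\omega_{\varphi_j}(\eps)=0$, is exactly what that proposition requires; its conclusion is that the full dual $X^*=\left[\bigoplus X_j\right]_1^{\,*}$ admits a uniformly simultaneously continuous retraction, which I will call $\varphi$.

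Next I would invoke the observation recorded just before the BPBp theorem, namely that if $X^*$ admits a uniformly simultaneously continuous retraction $\varphi:X^*\to B_{X^*}$, then $\varphi$ is automatically a weak-$*$ $\omega_\varphi$-approximate nearest point map (this is Benyamini's remark). Consequently $X^*$ admits a weak-$*$ approximate nearest point map. The only thing worth pausing over here is that the modulus of continuity $\omega_\varphi$ is the nonnegative function with $\lim_{t\to0+}\omega_\varphi(t)=0$ that plays the role of the function $f$ in the definition of an $f$-approximate nearest point map; since $\varphi$ is uniformly continuous this limit condition is met, so the required map genuinely exists.

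Finally I would apply the theorem asserting that $(X,C_0(K))$ has the BPBp whenever $X^*$ admits a weak-$*$ approximate nearest point map. With $X=\left[\bigoplus X_j\right]_1$ and the weak-$*$ approximate nearest point map just produced, this yields that $(X,C_0(K))$ has the BPBp for every locally compact Hausdorff space $K$, which is the desired conclusion.

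There is no genuine obstacle in this corollary: all of the analytic work has already been done, first in Proposition~\ref{prop:product} (the $\ell_1$-sum stability of simultaneously continuous retractions) and then in the BPBp theorem (the passage from a weak-$*$ approximate nearest point map on $X^*$ to the Bishop--Phelps--Bollob\'as property for operators into $C_0(K)$). The corollary merely records the composition, and the one point to verify is the verbatim matching of hypotheses between the two cited results, which holds. I would therefore keep the written proof to a single sentence noting that the result follows by combining Proposition~\ref{prop:product} with the preceding BPBp theorem.
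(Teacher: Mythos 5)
Your argument is correct and is exactly the route the paper intends: Proposition~\ref{prop:product} yields a uniformly simultaneously continuous retraction on $X^*$, Benyamini's remark upgrades it to a weak-$*$ approximate nearest point map, and the preceding theorem then gives the BPBp for $(X, C_0(K))$. No issues.
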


For the range spaces, the stability of the BPBp under various direct sums of Banach spaces is studied in \cite{ACKLM}. We get here some stability results for the domain spaces when the range is $C(K)$.
\begin{example}
Let $X$ be a Banch space whose dual $X^*$ admits a uniformly simultaneously continuous retraction like $\ell_p$ or $C(S)$ spaces for all compact Hausdorff space $S$. Then
($\ell_1(X), C(K))$ has the BPBp for all compact Hausdorff space $K$. Moreover, we also have the same result for the finite $\ell_1$ sums of different Banach spaces  whose dual admits a uniformly simultaneously continuous retraction. For example, we see that $(\ell_p\oplus_1 C(S),C(K))$ has the BPBp.
\end{example}

Recently it is shown \cite{8} that the pair $(C(S), C(K))$ has the BPBp if $C(S)$ and $C(K)$ are spaces of real-valued continuous functions on a  compact Hausdorff spaces $S$ and $K$ respectively. However it is still open for the spaces of complex-valued continuous functions. It is shown \cite{Ben2} that $C(S)^*$ admits weak-$*$ approximate nearest point map if $S$ is a compact metric space.

\begin{corollary}
Let $S$ be a locally compact metrizable space and $K$ a locally compact Hausdorff space. Then for real-spaces $C_0(S)$ and $C_0(L)$, the pair $(C_0(S), C_0(L))$ has the BPBp.
\end{corollary}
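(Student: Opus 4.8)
The plan is to assemble three facts already established in the paper into a short deductive chain, taking the domain space to be $X = C_0(S)$ and invoking the general Bishop-Phelps-Bollob\'as criterion for ranges of the form $C_0(K)$.

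First I would produce an approximate nearest point map on the dual of the domain. Since $S$ is locally compact metrizable, its one-point compactification $\hat S$ is compact metrizable, so the corollary on metrizable spaces applies and gives that the real space $C_0(S)^*$ admits a uniformly simultaneously continuous retraction $\varphi : C_0(S)^* \to B_{C_0(S)^*}$. By the observation recorded just before the main Bishop-Phelps-Bollob\'as theorem (attributed to Benyamini), such a $\varphi$ is automatically a weak-$*$ $\omega_\varphi$-approximate nearest point map. Hence $C_0(S)^*$ admits a weak-$*$ approximate nearest point map.

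Next I would feed this into the general theorem stating that if $X^*$ admits a weak-$*$ approximate nearest point map, then $(X, C_0(K))$ has the BPBp for every locally compact Hausdorff space $K$. Applying it with $X = C_0(S)$ yields at once that $(C_0(S), C_0(K))$ has the BPBp, which is the assertion (the symbol $L$ in the statement being the range parameter $K$).

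There is essentially no hard step here; all of the analytic work was done in constructing the retraction for metrizable $C_0(S)$ and in proving the BPBp criterion. The only point that needs care is the alignment of hypotheses: one must confirm that "locally compact metrizable" is exactly what the retraction corollary requires of the domain, and that the notion delivered there (a \emph{uniformly} simultaneously continuous retraction) indeed upgrades to the \emph{weak-$*$ approximate nearest point map} demanded by the BPBp theorem, which is guaranteed by the noted implication $\varphi \text{ retraction} \Rightarrow \varphi$ is a weak-$*$ $\omega_\varphi$-approximate nearest point map. No metrizability or separability is needed on the range side, since the BPBp criterion allows an arbitrary locally compact Hausdorff $K$.
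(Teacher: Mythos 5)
Your proposal is correct and follows exactly the chain the paper intends: the one-point-compactification corollary gives a uniformly simultaneously continuous retraction on $C_0(S)^*$, the remark before the main theorem upgrades it to a weak-$*$ $\omega_\varphi$-approximate nearest point map, and the BPBp criterion for ranges $C_0(K)$ finishes the argument. The paper leaves this corollary without an explicit proof precisely because this assembly is immediate, so your write-up matches its approach.
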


It is worth-while to remark that the first-named author shows that $(c_0, X)$ has the BPBp for all uniformly convex spaces $X$ \cite{Kim-c_0}.

Let $K(X, Y)$ be a the subspace of $L(X, Y)$ which consists of all compact operators from a Banach space $X$ into a Banach space $Y$. Recently the notion of $B^k$ was introduced by Mart\'in \cite{Mar}. A Banach space $Y$  is said to have property $B^k$ if for any Banach space $X$, the norm-attaining compact operators are dense in $K(X, Y)$. Johnson and Wolfe  \cite{JoWo} showed that $C(K)$ space has property $B^k$. Using retraction, we get the following theorem which is a generalization of Theorem~2.2 in \cite{AAGM2}.

\begin{theorem}
Let $K$ be a compact Hausdorff space and let $E$ be a Banach space. Then for each $0<\eps<1$, there is $\eta(\eps)>0$ such that if $T\in S_{K(E, C(K))}$ and $\|T(x_0)\|>1-\eps(\eps)$, there exist $S\in S_{K(E, C(K))}$ and $x_1\in S_E$ such that $\|S(x_1)\|=1$, $\|x_0 - x_1\|<\eps$ and $\|S-T\|<\eps$.  In fact we can take $\eta(\eps) = \frac{\eps^2}{64}$.
\end{theorem}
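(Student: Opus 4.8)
The plan is to transport the problem into the function-space picture of Lemma~\ref{lem-isometric-isomorphism} and then run a Bishop--Phelps--Bollob\'as perturbation, but with the \emph{radial} retraction in place of the weak-$*$ approximate nearest point map that was needed in the non-compact case. The key observation is that, because we only want the perturbed operator $S$ to be \emph{compact}, Lemma~\ref{lem-isometric-isomorphism} requires merely that the associated function $K\to E^*$ be \emph{norm}-continuous; for this the elementary radial retraction $g(z^*)=z^*/\max\{1,\|z^*\|\}$ onto $B_{E^*}$ suffices, and it is norm-continuous. Thus none of the retraction machinery of Section~2 is invoked, which is precisely why no hypothesis on $E$ is needed and why this generalizes Theorem~2.2 of \cite{AAGM2}.

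For the setup I would use Lemma~\ref{lem-isometric-isomorphism} to identify $T\in S_{K(E,C(K))}$ with the norm-continuous map $\varphi:K\to E^*$, $\varphi(t)=T^*(\delta_t)$, with $\sup_t\|\varphi(t)\|=1$. Set $\eta(\eps)=\eps^2/64$. From $\|T(x_0)\|>1-\eta(\eps)$ choose $t_0\in K$ with $|\varphi(t_0)(x_0)|>1-\eps^2/64$; then $\|\varphi(t_0)\|>1-\eps^2/64$, so $u^*:=\varphi(t_0)/\|\varphi(t_0)\|$ is well defined with $\|u^*-\varphi(t_0)\|=1-\|\varphi(t_0)\|<\eps^2/64$. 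Multiplying $x_0$ by a unimodular scalar $\lambda$ (which is undone at the end at no cost, since $\tilde x_1:=\bar\lambda x_1$ satisfies $\|x_0-\tilde x_1\|=\|\lambda x_0-x_1\|$ and $|x_1^*(\tilde x_1)|=1$), I may assume $u^*(x_0)$ is real with $|u^*(x_0)-1|<\eps^2/64=(\eps/4)^2/4$.

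The perturbation step applies Theorem~\ref{thm:BPBTheorem} with $\delta=\eps/4$ to $u^*$ and $x_0$, producing $x_1\in S_E$ and a norm-attaining $x_1^*\in S_{E^*}$ with $x_1^*(x_1)=1$, $\|x_0-x_1\|<\eps/4$ and $\|x_1^*-u^*\|<\eps/4$, whence $d:=\|x_1^*-\varphi(t_0)\|<\eps/4+\eps^2/64$. I then fix $f_0\in C(K)$ with $f_0(t_0)=1$ and $0\le f_0\le1$, put $\psi_0(t)=\varphi(t)+f_0(t)(x_1^*-\varphi(t_0))$, and define $\psi=g\circ\psi_0$, that is $\psi(t)=\psi_0(t)/\max\{1,\|\psi_0(t)\|\}$. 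Since $\psi_0$ is norm-continuous and $g$ is norm-continuous, $\psi$ is norm-continuous, so by Lemma~\ref{lem-isometric-isomorphism} it corresponds to a compact operator $S$ with $\|S\|=\sup_t\|\psi(t)\|\le1$. Because $\psi_0(t_0)=x_1^*$ has norm $1$ we get $\psi(t_0)=x_1^*$, hence $\langle Sx_1,\delta_{t_0}\rangle=x_1^*(x_1)=1$ and $\|S\|=\|Sx_1\|=1$.

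The final estimate is a matter of bookkeeping: for every $t$ one has $\|\psi_0(t)-\varphi(t)\|=f_0(t)\,d\le d$, and wherever $\|\psi_0(t)\|>1$ the radial retraction displaces $\psi_0(t)$ by exactly $\|\psi_0(t)\|-1\le\|\psi_0(t)\|-\|\varphi(t)\|\le d$; therefore $\|S-T\|=\sup_t\|\psi(t)-\varphi(t)\|\le 2d<\eps/2+\eps^2/32<\eps$ for $0<\eps<1$, while $\|x_0-x_1\|<\eps/4<\eps$. I expect the only delicate point to be aligning the three sources of error, the Bishop--Phelps--Bollob\'as displacement $\delta=\eps/4$, the normalization gap $1-\|\varphi(t_0)\|$, and the radial-retraction overshoot, so that their sum stays below $\eps$ with $\eta(\eps)=\eps^2/64$; the genuinely new idea, namely that the norm-continuous radial retraction already delivers a compact $S$, is what removes all assumptions on the domain $E$.
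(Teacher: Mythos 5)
Your proposal is correct and follows essentially the same route as the paper's own proof: identify $T$ with the norm-continuous map $\varphi(t)=T^*(\delta_t)$, apply the Bishop--Phelps--Bollob\'as theorem at a point $t_0$ where $|\varphi(t_0)(x_0)|$ is nearly $1$, perturb by $x_1^*-\varphi(t_0)$, and compose with the radial retraction onto $B_{E^*}$ to recover a compact operator. The only cosmetic differences are your inclusion of the bump function $f_0$ (which the paper reserves for the $C_0(K)$ case and drops here, i.e.\ takes $f_0\equiv 1$) and your slightly tighter bookkeeping of the retraction displacement, which arrives at the same $\eta(\eps)=\eps^2/64$.
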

\begin{proof}
Given $\eps>0$, suppose that $\|T(x_0)\|>1-\eps^2/4$ for some $T\in S_{L(E, C(K))}$ and $x_0\in S_E$. Let $\varphi :K\to E^*$ be the function $\varphi(s) = T^*(\delta_s)$ for all $s\in K$. Since $T$ is compact, $\varphi$ is norm-continuous. Choose $t_0\in K$ such that $|T(x_0)(t_0)| = |\inner{ x_0, T^*(\delta_{t_0})}| = |\varphi(t_0)(x_0)|>1-\eps^2/4$. By the Bishop-Phelps-Bollob\'as theorem~\ref{thm:BPBTheorem}, there exists a norm-attaining functional $x_1^*\in S_{E^*}$ and $x_1\in S_X$ such that
\[ \|x_0 - x_1 \|<\eps, \ \ \ \ \|x_1^* - \frac{\varphi(t_0)}{\|\varphi(t_0)\|}\|<\eps.\]
Since $\|\varphi(t_0) - \frac{\varphi(t_0)}{\|\varphi(t_0)\|} \|= 1-\|\varphi(t_0)\|<\eps^2/4<\eps$, we have $\|x_1^* - \varphi(t_0)\|<2\eps$.
Let $r: E^* \to B_{E^*}$ be the retraction defined by
$r(x)=x$ if $\|x\|\le 1$ and $r(x)=\frac{1}{\|x\|}x$ if $\|x\|\ge 1$.
Define the norm-continuous map $\psi: K\to E^*$ by
\[ \psi(t) = r(\varphi(t) + x_1^* -\varphi(t_0)) \ \ \ \ (t\in K).\]

Then $\psi(t_0) = r(x_1^*)=x_1^*.$ Let $S$ be the corresponding compact operator and
\[ 1\ge \|S\|\ge\|Sx_1\| \ge |\inner{ Sx_1, \delta_{t_0}}|= |\inner{\psi(t_0), x_1}|=| \inner{x_1^*, x_1}|=1.\]
Hence we have $\|S\|=1=\|Sx_1\|$. Since $1\le \|y\|\le 1+\eps$ implies that
\[\|r(y^*) - y^*\| \le \|r(y^*) - r(\frac{y^*}{\|y^*\|}) \| +\|\frac{y^*}{\|y^*\|}-y^*\|\le 2\eps,\]
we have
\begin{align*}
 \|S- T\|& =\sup_{t\in K} \|\varphi(t) - \psi(t) \| = \sup_{t\in K} \| r(\varphi(t) + x_1^* - \varphi(t_0)) - \varphi(t) \|\\
& \le 2\eps + \|x_1^*-\varphi(t_0)\|\le 4\eps.
\end{align*} Therefore, by letting $\eta(\eps) =\frac{\eps^2}{64}$, we get the desired result.
\end{proof}

Because $C(K)$ space is a predual of an $L_1$ space, the above theorem is equivalent to the following which is proved in \cite{8} and we omit the proof.
\begin{theorem}\cite{8}
For each $0<\eps<1$, there is $\eta(\eps)>0$ such that if  $E$ is any Banach space, $Y$ is any predual of an $L_1$-space, $T\in S_{K(E, Y)}$ and $\|T(x_0)\|>1-\eps(\eps)$, there exist $S\in S_{K(E, Y)}$ and $x_1\in S_E$ such that $\|S(x_1)\|=1$, $\|x_0 - x_1\|<\eps$ and $\|S-T\|<\eps$.
\end{theorem}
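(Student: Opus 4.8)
The plan is to deduce the general $L_1$-predual case from the $C(K)$ case proved in the previous theorem, since the two statements are asserted to be equivalent; the only nontrivial direction is that the validity of the conclusion for every $C(K)$ forces it for every predual $Y$ of an $L_1$-space, with the \emph{same} control on $\eta$ up to an absolute rescaling of $\eps$. The engine is the local structure of Lindenstrauss spaces: if $Y^*$ is isometric to some $L_1(\mu)$, then by Lindenstrauss's characterization $Y$ is an $\mathcal L_{\infty,1+}$-space, i.e.\ for every finite-dimensional $F\subset Y$ and every $\delta>0$ there is a finite-dimensional $G$ with $F\subset G\subset Y$ that is $(1+\delta)$-isometric to some $\ell_\infty^{n}=C(\{1,\dots,n\})$. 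The key extra feature is that $\ell_\infty^{n}$ is $1$-injective, so such a $G$ is $(1+\delta)$-complemented in $Y$.

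First I would localize using the compactness of $T$. Since $\overline{T(B_E)}$ is norm-compact, given $\delta>0$ one finds a finite-dimensional subspace $F\subset Y$ with $T(B_E)\subset F+\delta B_Y$. Enlarging $F$ to a subspace $G\subset Y$ that is $(1+\delta)$-isometric to $\ell_\infty^{n}$, and using the $1$-injectivity of $\ell_\infty^{n}$ to produce a projection $P:Y\to G$ with $\|P\|\le 1+\delta$ and $P|_{G}=\mathrm{id}$, I would set $T'=P\circ T:E\to G$. Then $T'$ is compact, maps into $G$, and for $x\in B_E$ writing $Tx=f+r$ with $f\in F\subset G$ and $\|r\|\le\delta$ gives $T'x-Tx=Pr-r$, whence $\|T'-T\|\le(2+\delta)\delta$ is small and $\|T'(x_0)\|$ is still close to $1$.

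Next I would transport everything to $C(\{1,\dots,n\})$ through the $(1+\delta)$-isomorphism $u:G\to C(\{1,\dots,n\})$. The operator $u\circ T'$ is compact, and after normalizing its norm it satisfies the hypothesis $\|(u\circ T')(x_0)\|>1-\eta$ with $\eta$ only mildly degraded by the factor $(1+\delta)$. Applying the previously proved $C(K)$ theorem yields $S_0\in S_{K(E,C(\{1,\dots,n\}))}$ and $x_1\in S_E$ with $\|S_0x_1\|=1$, $\|x_0-x_1\|<\eps$ and $\|S_0-u\circ T'\|<\eps$. Pulling back through $u^{-1}:C(\{1,\dots,n\})\to G\subset Y$ and renormalizing produces the required $S\in S_{K(E,Y)}$; norm-attainment survives because $u^{-1}$ distorts norms by at most $(1+\delta)$, a discrepancy absorbed by the final normalization.

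The main obstacle will be the bookkeeping that keeps the \emph{composite} modulus a universal function of $\eps$ alone: there are three independent error sources, namely the $\delta$-truncation of the compact operator, the $(1+\delta)$-distortion of the local $\ell_\infty^{n}$ embedding together with the projection $P$, and the error coming from the $C(K)$ theorem. Choosing $\delta$ to be a fixed small multiple of $\eps^{2}$ and tracking how renormalizing $u^{-1}S_0$ back onto the unit sphere perturbs both $\|S-T\|$ and $\|Sx_1\|$ should keep all quantities within the stated tolerances. Crucially, the $C(K)$ result is uniform in both $K$ and $E$ (with $\eta=\eps^{2}/64$ there), so nothing in the reduction depends on $Y$ or $E$, and the resulting $\eta(\eps)$ is a function of $\eps$ only.
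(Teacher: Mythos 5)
First, note that the paper does not actually prove this statement: it observes that every $C(K)$ is itself a predual of an $L_1$-space (so the statement formally contains the preceding theorem) and refers to \cite{8} for the proof of the general case. So what has to be assessed is whether your reduction from the $C(K)$ case is sound on its own, and as written it is not. The localization steps are fine: compactness of $T$ yields a finite-dimensional $F$ with $T(B_E)\subset F+\delta B_Y$; Lindenstrauss's characterization of $L_1$-preduals as $\mathcal{L}_{\infty,1+}$-spaces yields $G\supset F$ that is $(1+\delta)$-isomorphic to $\ell_\infty^n$; and the $1$-injectivity of $\ell_\infty^n$ yields a projection $P$ of $Y$ onto $G$ of norm at most $(1+\delta)^2$, so $T'=PT$ is a small compact perturbation of $T$ with range in $G$. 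The gap is your final sentence. Exact norm-attainment does not survive transport through a $(1+\delta)$-isomorphism: if $S_0\in S_{K(E,\ell_\infty^n)}$ satisfies $\|S_0x_1\|=\|S_0\|=1$ and you set $S=u^{-1}S_0/\|u^{-1}S_0\|$, all you can conclude is $\|Sx_1\|\geq (1+\delta)^{-2}$, which is strictly weaker than $\|Sx_1\|=\|S\|$; there is no reason why $S$ should attain its norm at $x_1$, or indeed anywhere. Norm-attainment is a non-open condition that is destroyed by arbitrarily small distortions of the norm --- this instability is the reason the Bishop--Phelps--Bollob\'as circle of problems is nontrivial in the first place --- so it cannot be ``absorbed by the final normalization.''

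The skeleton can be repaired, but only by removing the $(1+\delta)$ from the crucial step. Two standard ways to do this: (i) use the isometric local structure of $L_1$-preduals --- by the Lazar--Lindenstrauss theorem a \emph{separable} $L_1$-predual is the closure of an increasing union of subspaces \emph{isometric} to $\ell_\infty^{n_k}$, each of which is then $1$-complemented in $Y$ by injectivity; an operator into such a subspace that attains its norm there also attains it as an operator into $Y$ with the same norm, so the $C(K)$ theorem applies with no loss (one then needs a separate separable-reduction argument, since a separable subspace of an $L_1$-predual need not be an $L_1$-predual); or (ii) pass to $Y^{**}\cong L_\infty(\mu)$, which is \emph{isometrically} a $C(K)$, apply the $C(K)$ theorem to $J_YT$ there, and then use the principle of local reflexivity together with compactness to bring the resulting norm-attaining operator back into $Y$ --- and even there, preserving the equality $\|Sx_1\|=\|S\|$ requires a specific argument rather than a renormalization. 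Either route needs a genuinely new ingredient beyond what your outline supplies, which is presumably why the paper defers to \cite{8} rather than sketching the equivalence.
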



\begin{thebibliography}{99}

\bibitem{Acosta-RACSAM}
\textsc{M.~D.~Acosta},
Denseness of norm attaining mappings, \emph{Rev. R. Acad. Cien. Serie A. Mat.} {\bf 100} (2006), 9--30.

\bibitem{AAGM2}
\textsc{M.~D.~Acosta, R.~M.~Aron, D.~Garc\'ia and M.~Maestre}, The Bishop-Phelps-Bollob\'as Theorem for
 operators, \emph{J. Funct. Anal.} {\bf 254} (2008),
  2780--2799.


\bibitem{8} \textsc{M.~D.~Acosta, J.~Becerra-Guerrero, Y.~S.~Choi, M.~Ciesielski, S.~K.~Kim, H.~J.~Lee, M.~L.~ Louren\c{c}o and M.~Mart\'in.} The Bishop-Phelps-Bollob\'as property for operators between spaces of continuous functions, \emph{Preprint}

\bibitem{AAP}
\textsc{M.~D.~Acosta, F.~J.~Aguirre and R.~Pay\'{a}},
There is no bilinear Bishop-Phelps theorem, \emph{Israel J. Math.} {\bf 93}
(1996), 221-227.




\bibitem{AGM}
\textsc{M.~D.~Acosta, D.~Garc\'{\i}a and M. Maestre}, {A multilinear
Lindenstrauss theorem},  \emph{J. Funct. Anal.} {\bf 235} (2006), 122-136.



\bibitem{ACGM}
\textsc{R.~M.~Aron, Y.~S.~Choi, D.~Garc\'{\i}a and  M.~Maestre},
The Bishop-Phelps-Bollob\'{a}s Theorem for ${\mathcal{L}}(L_1(\mu), L_\infty[0,1])$, \emph{Adv. Math.} {\bf 228} (2011), 617--628.



\bibitem{ACK} \textsc{R.~M.~ Aron, B.~Cascales and O.~Kozhushkina}
The Bishop-Phelps-Bollob\'as theorem and Asplund operators, \emph{Proc. AMS.} {\bf 139} (2011) 3553--3560.

\bibitem{AFW}
\textsc{R.~Aron, C.~Finet and E.~Werner}, Some remarks on norm
attaining N-linear forms, \emph{In: Functions Spaces (K. Jarosz, Ed.), Lecture Notes in Pure and Appl. Math.} {\bf 172}, Marcel-Dekker, NewYork, 1995, 19-28.

\bibitem{ACKLM}
\textsc{R.~Aron, Y.~S.~Choi, S.~K.~Kim, H.~J.~Lee and M.~Mart\'in}, {The Bishop-Phelps-Bollob\'{a}s version of  Lindenstrauss properties A and B}, \emph{Preprint.}


\bibitem{Ben1} \textsc{Y.~Benyamini}, Simultaneously continuous retractions on the unit ball of a Banach space, \emph{J. Appr. Theory}

\bibitem{Ben2} \textsc{Y.~Benyamini}, Small into-isomorphisms between spaces of continuous functions, \emph{Proc. AMS.} \textbf{83} (1981), 479--485.

\bibitem{BenLin} \textsc{Y.~Benyamini, J.~Lindenstrauss}, Geometric Nonlinear Functional Analysis, \emph{Colloquium Publication, AMS}, Vol. 48. 2000.

\bibitem{BP} \textsc{E.~Bishop and R.~R.~Phelps}, A proof that every Banach space is subreflexive, \emph{Bull. Amer. Math. Soc.} {\bf 67} (1961), 97-98.

\bibitem{Bollobas} \textsc{B.~Bollob\'as}, An extension to the theorem of Bishop and Phelps, \emph{Bull. London. Math. Soc.} {\bf 2} (1970), 181-182.

\bibitem{Bou} \textsc{J.~Bourgain}, Dentability and the Bishop-Phelps property, \emph{Israel J. Math.} {\bf 28} (1977), 265-271.

\bibitem{CasGuiKad}
\textsc{B.~Cascales, A.~J.~Guirao, and V.~Kadets}, A Bishop-Phelps-Bollob\'{a}s type theorem for uniform algebras, \emph{Adv. Math.} \textbf{240} (2013), 370--382.

\bibitem{ChoiKamLee} \textsc{C.~Choi, A.~Kami\'nska and H.~J.~Lee}, Complex convexity of Orlicz-Lorentz spaces and its applications.%
\emph{Bull. Pol. Acad. Sci. Math.} \textbf{52}  (2004), 19 -–38.


\bibitem{C}
\textsc{ Y. S. Choi}, {Norm attaining bilinear forms on $L^1[0,1]$},
\emph{ J. Math. Anal. Appl. }  {\bf 211} (1997), 295-300.

\bibitem{CK1}
\textsc{Y. S. Choi and S. G. Kim}, {Norm or numerical radius attaining
multilinear mappings and polynomials}, \emph{J. London Math. Soc.}
{\bf 54 (1)} (1996), 135-147.





\bibitem{CKLM} \textsc{Y.~S.~Choi, S.~K.~Kim, H.~J.~Lee and M.~Mart\'in},
The Bishop-Phelps-Bollob\'{a}s theorem for operators on $L_1(\mu)$, \emph{Preprint}.

\bibitem{Day} \textsc{M.~Day}, Some more uniformly convex spaces, \textit{Bull. Amer. Math. Soc.} {\bf 47} (1941), 504-507.
\bibitem{DuSch} \textsc{N.~Dunford and J.~Schwartz}, \textit{Linear operators}, Volume I, Interscience, New York, 1958.



\bibitem{FP}
\textsc{C.~Finet and R.~Pay\'{a}}, {Norm attaining operators from $L_1$ into $L_{\infty}$},
\emph{Israel J. Math.} {\bf 108} (1998), 139--143.


\bibitem{FK}
\textsc{P.~Foralewski and P.~Kolwicz}, {Local uniform rotundity in
Calder\'on-Lozanovski\u{i} spaces},  J. Convex Anal. {\bf 14} (2007), no. 2, 395--412.



\bibitem{HN}
\textsc{H.~Hudzik and A.~Narloch}, {Relationships between monotonicity
and complex rotundity properties with some consequences}, Math.
Scand. {\bf 96} (2005), no. 2, 289--306.


\bibitem{JoWo} \textsc{J.~Johnson and J.~Wolfe},  Norm attaining operators, \emph{Studia Math.} {\bf 65}
(1979), 7--19.

\bibitem{JohWol} \textsc{J.~Johnson and J.~Wolfe} {Norm attaining operators and simultaneously continuous retractions}, \emph{Proc. AMS.} \textbf{86} (1982),
609--612.


\bibitem{KL}
\textsc{J.~Kim and H.~J.~Lee}, {Strong peak points and strongly norm attaining points with applications to denseness and polynomial
numerical indices}, \emph{J. Funct. Anal.} {\bf 257} (2009), 931-947.

\bibitem{Kim-c_0} 
\textsc{S.~K.~Kim}, {The Bishop-Phelps-Bollob\'as Theorem for operators from $c_0$ to uniformly convex spaces}, \emph{Israel J. Math.} {\bf 197} (2013), 425-435
    
\bibitem{Lee} \textsc{H.~J.~Lee},  {Monotonicity and complex convexity in {B}anach lattices},
   \emph{J. Math. Anal. Appl.} {\bf 307} (2005),  {86--101}.

\bibitem{Lee2} \textsc{H.~J.~Lee},  Complex conveixty and monotonicity in quasi-Banach lattices,
   \emph{Israel J. Math.}



\bibitem{Lindens} \textsc{J.~Lindenstrauss}, On operators which attain their norm, \emph{Israel J. Math.} {\bf 1} (1963), 139-148.

\bibitem{Mar} \textsc{M.~Mart\'in} Norm-attaining compact operators, \emph{Preprint}.

\bibitem{PS} \textsc{R.~Pay\'{a} and Y.~Saleh}, Norm attaining operators from $L_1(\mu)$ into $L_\infty(\nu)$, \emph{Arch. Math.} \textbf{75} (2000), 380--388.


\bibitem{Partington} \textsc{J.~R.~Partington}, Norm attaining operators, \emph{Israel J. Math.} \textbf{43} (1982), 273--276.


\bibitem{Schacher}
\textsc{W.~Schachermayer}, Norm attaining operators and renormings of Banach spaces, \emph{Israel J.
Math.} \textbf{44} (1983), 201--212.

\bibitem{Schacher-Pacific}
\textsc{W.~Schachermayer}, Norm attaining operators on some classical Banach spaces, \emph{Pacific J. Math.} \textbf{105} (1983), 427--438.


\end{thebibliography}
\end{document}